\def\nc{\newcommand}
\def\ga{\gamma}\def\de{\delta}
\def\ep{\epsilon}
\def\bff{{\bf f}}
\def\om{\omega}
 \def\Om{\Omega}
\def\la{\lambda}
\def\La{\Lambda}
\def\integral{\int}
\def\fintegral{\fint}
\def\mm{\mathcal{M}}
\def\mr{\mathcal{R}}
\def\aa{\mathcal{A}}
\def\ll{\mathcal{L}}
\nc\pa{\partial}
\def\pmd{p-\delta}
\nc\CC{\mathbb{C}}
\nc\RR{\mathbb{R}}
\nc\QQ{\mathbb{Q}}
\nc\ZZ{\mathbb{Z}}
\nc\NN{\mathbb{N}}
\def\bea{\begin{equation}\begin{aligned}}
\def\ena{\end{aligned}\end{equation}}
\def\beas{\begin{equation*}\begin{aligned}}
\def\enas{\end{aligned}\end{equation*}}
\nc\m[1]{\left| #1\right|}
\nc\norm[1]{\left\|#1\right\|}
\nc\axgrad[1]{\mathcal{A}(x, #1)}
\newtheorem{theorem}{Theorem}[section]
\newtheorem{lemma}[theorem]{Lemma}
\newtheorem{corollary}[theorem]{Corollary}
\newtheorem{proposition}[theorem]{Proposition}
\newtheorem{definition}[theorem]{Definition}
\newtheorem{remark}[theorem]{Remark}        
\numberwithin{equation}{section}
\def\gbmo{$(\ga, \, R_0)$-BMO }
\def\gflt{$(\ga, \, R_0)$-Reifenberg flat }
\def\omegarho{\Omega_{\rho}}
\def\fpmd{\frac{1}{\pmd}}
 \def\aw{  A}
\newcommand{\awpq}[2]{\aw_{\frac{#1}{#2}}}
\def\dv{\mathop{\rm div}}
\DeclareMathOperator{\diam}{diam}
\begin{document}
\title[An end-point weighted estimate for quasilinear equations]{An end-point global gradient weighted  estimate  for quasilinear equations in non-smooth domains}

\author[Karthik Adimurthi]
{Karthik Adimurthi}
\address{Department of Mathematics,
Louisiana State University,
303 Lockett Hall, Baton Rouge, LA 70803, USA.}
\email{kadimu1@math.lsu.edu \and karthikaditi@gmail.com}

\author[Nguyen Cong Phuc]
{Nguyen Cong Phuc}
\address{Department of Mathematics,
Louisiana State University,
303 Lockett Hall, Baton Rouge, LA 70803, USA.}
\email{pcnguyen@math.lsu.edu}

\begin{abstract}
A weighted norm inequality involving $A_1$ weights is obtained at the natural exponent for gradients of solutions to quasilinear elliptic equations in 
Reifenberg flat domains. Certain gradient estimates in Lorentz-Morrey spaces below the natural exponent are also obtained as a consequence of our analysis. 
\end{abstract}

\thanks{2010 Mathematics Subject Classification: 35J92, 35B45 (primary); 42B20, 42B37 (secondary).}

\maketitle


\section{Introduction}
One of the main goals of this paper is to obtain global gradient weighted estimates of the form 
\begin{equation}\label{A1-bound}
\int_\Om |\nabla u|^p w dx \leq C \int_\Om |{\bf f}|^p w dx
\end{equation}
for weights $w$ in the Muckenhoupt class $A_1$ and for solutions $u$ to the nonhomgeneous nonlinear boundary value problem
\begin{eqnarray}\label{basicpde}
\left\{ \begin{array}{rcl}
 \text{div}\,\aa(x, \nabla u)&=&\text{div}~|{\bf f}|^{p-2}{\bf f}  \quad \text{in} ~\Omega, \\
u&=&0  \quad \text{on}~ \partial \Omega.
\end{array} \right.
\end{eqnarray}
Here $p>1$ and $\text{div}\,\aa(x, \nabla u)$ is modelled after the standard $p$-Laplcian $\Delta_p u:= \text{div} (|\nabla u|^{p-2} \nabla u)$. Aslo, ${\bf f}$ is a given vector  field defined in a bounded domain $\Om$ that may have a non-smooth boundary.

More specifically, in \eqref{basicpde} the nonlinearity $\aa : \RR^n \times \RR^n \rightarrow \RR^n$ is a 
Carath\'edory vector valued function, i.e., $\aa(x,\xi)$ is measurable in $x$ for every $\xi$ and continuous in 
$\xi$ for a.e. $x\in\RR^n$. 
We also assume that $\aa(x,0) = 0$ and $\mathcal{A}(x, \xi)$ is continuously differentiable in $\xi$ away from the origin for a.e. $x \in \RR^n$. For our purpose, 
we  require that $\aa$ satisfy the following monotonicity and Lipschitz type conditions: for some $p>1$, there holds
\bea
\label{monotone}
  \langle\aa(x,\xi)-\aa(x,\eta),\xi-\eta \rangle\geq
\Lambda_0 (|\xi|^2+|\eta|^2)^{\frac{p-2}{2}}|\xi-\eta|^2 
\ena
and 
\bea
\label{ellipticity}
|\aa(x,\xi) - \aa(x,\eta)| \leq \Lambda_1 |\xi - \eta| (|\xi|^2+|\eta|^2)^{\frac{p-2}{2}}
\ena
for every  $(\xi, \eta)\in \RR^n \times \RR^n\setminus (0,0)$ and a.e. $x \in\RR^n$. Here $\La_0$ and $\La_1$ are positive constants.
Note that \eqref{ellipticity} and the assumption $\aa(x,0)=0$ for a.e. $x \in \RR^n$ imply the following growth condition
\beas
\label{bound-up}
|\aa(x,\xi) | \leq \La_1 |\xi|^{p-1}. 
\enas

Our additional regularity assumption on the nonlinearity $\aa$ is the following \gbmo condition. 
To formulate it, for each ball $B$, we let 
\beas
\overline{\aa}_B(\xi) = \fintegral_B \aa(x,\xi) \, dx = \frac{1}{|B|} \integral_B \aa(x,\xi)\, dx , 
\enas
and define the following function that measures the oscillation of $\aa(\cdot, \xi)$ over $B$ by
\beas
\Upsilon(\aa,B) (x) := \sup_{\xi \in \mathbb{R}^{n}\setminus \{0\}} \frac{|\aa({x, \xi}) - \overline{\aa}_{B}({\xi})|}{|\xi|^{p-1}}.
\enas

\begin{definition}
\label{BMO-condition}
 Given two positive numbers $\ga$ and $R_0$, we say that $\aa(x,\xi)$ satisfies a \gbmo condition  with exponent $\tau>0$ if
\beas
\left[ \aa \right]_\tau^{R_0} := \sup\limits_{y \in \RR^n,\, 0<r\leq R_0} \left( \fintegral_{B_r(y)} \Upsilon(\aa,\, B_r(y)))(x)^\tau \, dx \right)^{\frac{1}{\tau}} \leq \ga. 
\enas
\end{definition}

In the linear case, where $\aa(x,\xi) = A(x)\xi$ for an elliptic matrix $A$, we see that
\beas
\Upsilon(\aa,B)(x) \leq |A(x) - \overline{A}_B|
\enas
for a.e. $x \in \RR^n$, and thus Definition \ref{BMO-condition} can be viewed as a natural extension of the standard small BMO condition to the nonlinear setting. For general nonlinearities $\aa(x,\xi)$ of at most linear growth, i.e., $p=2$, the above \gbmo condition was introduced in \cite{BW1}, whereas such a condition for general $p>1$ appears first in \cite{Ph3}.  We remark that the \gbmo condition allows the nonlinearity $\aa(x,\xi)$ to have certain discontinuity in $x$, and it can be used as an appropriate substitute for the Sarason VMO condition (vanishing mean oscillation \cite{Sa}, see also \cite{BW2,BW1,GP,IKM,Mil,Se,VMR}).

The domain over which we solve our equations may be non-smooth but should satisfy some flatness condition. Essentially, at each boundary point and every scale, we require the boundary of the domain to be between two hyperplanes separated by a distance proportional to the scale. Absence of such flatness may result in a limited regularity of the solutions, as demonstrated in the counterexample given in  \cite{M-P1} (see also \cite{JK}).

\begin{definition}
 \label{reifenberg_flat}
Given $\ga \in (0,1)$ and $R_0 > 0$, we say that $\Omega$ is a \gflt domain if for every $x_0 \in \partial \Omega$ and every $r \in (0,R_0]$, there exists a system of coordinates $\{y_1,y_2,\ldots,y_n\}$, which may depend on $r$ and $x_0$, so that in this coordinate system $x_0 = 0$ and that 
\beas
B_r(0) \cap \{y_n > \ga r\} \subset B_r (0) \cap \Omega \subset B_r(0) \cap \{y_n > -\ga r\}.
\enas
\end{definition}

For more on Reifenberg flat domains and their many applications, we refer to the papers \cite{HM,Jon,KT1,KT2,Rei,Tor}. We mention here that Reifenberg flat domains can be very rough. They include Lipschitz domains with sufficiently small Lipschitz constants (see \cite{Tor}) and even some domains with fractal boundaries. In particular, all $C^1$ domains are included in this paper.
\begin{remark}\label{domainremark}
{\rm
If $ \Omega$ is a \gflt domain with $\gamma< 1$, then  for any
point $x \in \partial \Om$ and $ 0<\rho<R_0(1-\gamma)$ there exists a coordinate system  $\{z_{1}, z_{2}, \cdots, z_{n}\}$
with the origin 0 at some point in the interior of $\Omega$ such that in this coordinate system $x=(0, \dots, 0, -\gamma' \rho)$ and
\[
B_{\rho}^{+}(0) \subset \Omega \cap B_{\rho}(0) \subset B_{\rho}(0)\cap \{(z_1, z_2, \dots, z_n):z_{n} > -2\rho\gamma'\}, 
\]
   where  $\gamma' = \gamma/(1-\gamma)$ and $B_{\rho}^{+}(0):=B_{\rho}(0)\cap \{(z_1, \dots, z_n): z_n>0\}$. Thus, if $\gamma<1/2$ then
\[
B_{\rho}^{+}(0) \subset \Omega \cap B_{\rho}(0) \subset B_{\rho}(0)\cap \{(z_1, z_2, \dots, z_n):z_{n} > -4\rho\gamma\}. 
\]   
}
\end{remark}

Now we shall collect some properties of weights. In this paper, we shall only be concerned with Muckenhoupt weights.  By an  $A_s$ weight, $1<s<\infty$, we mean a nonnegative function
$w \in L^{1}_{\rm loc}(\mathbb{R}^n{})$ such that  the quantity
\[
[w]_{s} := \sup_{B} \left (\fintegral_{B} w(x)\, dx\right )\left (\fintegral_{B} w(x)^{\frac{-1}{s-1}}\, dx\right )^{s-1}  < +\infty,
\]
where the supremum is taken over all balls $B \subset \mathbb{R}^{n}$. For $s=1$, we say that $w$ is an $A_1$ weight if 
\[
[w]_{1} := \sup_{B} \left (\fintegral_{B} w(x)\, dx\right ) \norm{w^{-1}}_{L^\infty(B)}  < +\infty.
\]

The quantity $[w]_{s}$, $1\leq s <\infty$, will be referred to as the $A_{s}$ constant of $w$.
The $A_s$ classes are increasing, i.e., $A_{s_1}\subset A_{s_2}$ whenever $1\leq s_1<s_2<\infty$.   
A broader class of weights is the $\aw_{\infty}$ weights which,  by definition, is the union of all $\aw_{s}$ weights for $1\leq s< \infty$.
The following  characterization of $\aw_{\infty}$ weights will be needed later (see  \cite[Theorem 9.3.3]{Gra}).
\begin{lemma}\label{inversedoubling}
 A weight  $w \in \aw_{\infty}$ if and only if there are constants $\Xi_0, \Xi_1 > 0$ such that for every ball $B \subset \mathbb{R}^{n}$ and every measurable subsets $E$ of $B$
\begin{eqnarray}\label{inversedoublinginequality}
w(E)\leq \Xi_0 \left ( \frac{|E|}{|B|}\right)^{\Xi_1}w(B).
\end{eqnarray}
Moreover, if $w$ is an $\aw_s$ weight  with $[w]_s\leq \overline{\om}$ then the constants $\Xi_0$ and $\Xi_1$ above can be chosen so that 
$\max\{\Xi_0, 1/\Xi_1\} \leq c(\overline{\om}, n)$.
\end{lemma}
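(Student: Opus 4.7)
The plan is to establish both directions of this classical Coifman--Fefferman characterization of $A_\infty$ weights, following the route through the reverse H\"older inequality.

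For the forward direction, I would assume $w \in A_s$ with $[w]_s \leq \overline{\om}$ for some $s \in [1,\infty)$ and invoke the self-improving (reverse H\"older) property of $A_s$ weights: there exist constants $q = q(\overline{\om},n) > 1$ and $C = C(\overline{\om},n) > 0$ such that
\beas
\left(\fintegral_B w^q \, dx\right)^{1/q} \leq C \fintegral_B w \, dx
\enas
for every ball $B \subset \RR^n$. For any measurable $E \subset B$, H\"older's inequality with exponents $q$ and $q/(q-1)$ then gives
\beas
w(E) = \integral_B \chi_E \, w \, dx \leq |E|^{1-1/q} \left(\integral_B w^q \, dx\right)^{1/q} \leq C\, |E|^{1-1/q} |B|^{1/q} \fintegral_B w \, dx = C\left(\frac{|E|}{|B|}\right)^{1-1/q} w(B).
\enas
Choosing $\Xi_0 = C$ and $\Xi_1 = 1 - 1/q$ yields \eqref{inversedoublinginequality} with constants depending only on $\overline{\om}$ and $n$, which automatically satisfy $\max\{\Xi_0, 1/\Xi_1\} \leq c(\overline{\om},n)$.

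For the converse, I would assume \eqref{inversedoublinginequality} holds uniformly and aim to produce a reverse H\"older inequality, which by the Coifman--Fefferman theorem forces $w \in A_\infty$. Set $\lam := \fintegral_B w \, dx$ and, for $t > \lam$, apply the Calder\'on--Zygmund decomposition of $w$ on $B$ at level $t$ to obtain a pairwise disjoint collection of subballs $\{Q_j\}$ on which $w$ is essentially of size $t$, with $\{w > t\} \subset \bigcup_j Q_j$ (up to a null set) and $\sum_j |Q_j| \leq t^{-1} w(B)$. Applying \eqref{inversedoublinginequality} on each doubling ancestor of $Q_j$ converts the smallness of $|Q_j|/|B|$ into smallness of $w(Q_j)/w(B)$; summing over $j$ gives $w(\{w > t\}) \leq C (\lam/t)^{\Xi_1} w(B)$. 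Integrating this tail bound yields $\int_B w^{1+\ep} \leq C |B| \lam^{1+\ep}$ for some small $\ep = \ep(\Xi_0,\Xi_1,n) > 0$, which is exactly reverse H\"older, and hence $w \in A_s$ with $s = s(\Xi_0,\Xi_1,n)$.

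The main obstacle is the reverse implication: extracting a reverse H\"older estimate from the purely set-theoretic inequality \eqref{inversedoublinginequality} requires a careful stopping-time/Calder\'on--Zygmund argument, together with the observation that \eqref{inversedoublinginequality} already encodes a doubling property (take $E = 2B \setminus B$ after extending the inequality to larger balls, or apply it on concentric balls). Once reverse H\"older is obtained, membership in some $A_s$ class follows from the standard Gehring-type self-improvement, and the quantitative dependence of $s$ on $(\Xi_0,\Xi_1,n)$ is then automatic.
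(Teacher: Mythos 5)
The paper does not prove this lemma at all: it is quoted verbatim from the literature (the text cites \cite[Theorem 9.3.3]{Gra}), so there is no ``paper proof'' to compare against. Your argument is the standard textbook proof of that cited result, and it is essentially correct. The forward direction (reverse H\"older for $A_s$ weights plus H\"older's inequality on $\chi_E w$) is complete and correctly tracks the quantitative claim, since the reverse H\"older exponent and constant depend only on an upper bound for the $A_s$ constant and on $n$, which is exactly what produces $\max\{\Xi_0,1/\Xi_1\}\leq c(\overline{\om},n)$. The converse direction (Calder\'on--Zygmund decomposition at height $t>\lambda$, conversion of $\sum_j|Q_j|\leq t^{-1}w(B)$ into the tail bound $w(\{w>t\}\cap B)\leq C(\lambda/t)^{\Xi_1}w(B)$ via \eqref{inversedoublinginequality}, then integration of the distribution function with $\ep<\Xi_1$ to get reverse H\"older) is also the standard route.

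One soft spot worth flagging in the converse: the last step is not quite ``Gehring self-improvement.'' Gehring's lemma only improves the reverse H\"older exponent; it does not by itself place $w$ in some $A_s$ class, which is what the lemma requires since the paper defines $A_\infty$ as $\bigcup_{s<\infty}A_s$. To close the loop you need the separate classical implication that a reverse H\"older inequality (equivalently, the condition ``$|E|\leq\delta|B|\Rightarrow w(E)\leq \tfrac12 w(B)$'' that follows from \eqref{inversedoublinginequality}) forces $w\in A_s$ for some finite $s$ --- for instance by running a symmetric stopping-time argument to obtain a reverse H\"older inequality for $w^{-1/(s-1)}$, or by invoking the full Coifman--Fefferman equivalence chain. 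This is standard and is part of the very theorem being cited, but as written your proposal asserts it rather than proves it; if you intend a self-contained proof, that implication needs its own two or three lines of justification. The minor technicalities you already acknowledge (CZ cubes protruding from $B$, deducing doubling from \eqref{inversedoublinginequality} on concentric balls) are handled in the standard way.
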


In \eqref{inversedoubling}, the notation  $w(E)$  stands for the integral $\integral_{E}w(x)\, dx$, and likewise for $w(B)$, etc.
Henceforth, we will use this notaion without further explanation. Also, we will refer to 
$(\Xi_0,\Xi_1)$ as a pair of $\aw_{\infty}$ constants of $w$ provided they satisfy \eqref{inversedoublinginequality}.

We  now  recall the definition of weighted Lorentz spaces. For a general weight $w$, the weighted Lorentz space $L_w(s,t)(\Om)$ with $0<s<\infty$, $0<t\leq \infty$, is the set of measurable functions $g$ on $\Om$ such that 
\begin{equation*}
\|g\|_{L_w(s,\, t)(\Omega)} :=
\left[s \integral_{0}^{\infty}\alpha^t (w(\{x\in\Om: |g(x)|>\alpha\}))^{\frac{t}{s}} \frac{d\alpha}
{\alpha}\right]^{\frac{1}{t}} <+ \infty
\end{equation*}
when $t\neq\infty$; for $t=\infty$ the space $L_w(s,\, \infty)(\Om)$ is set to be the usual Marcinkiewicz space with quasinorm
\beas
\|g\|_{L_w(s,\, \infty)(\Omega)}:=\sup\limits_{\alpha >0} \, \alpha w(\{x\in \Om: |g(x)|>\alpha\})^{\frac{1}{s}}.
\enas
It is easy to see that when $t=s$ the weighted Lorentz space $L_w(s,\, s)(\Om)$ is nothing but the weighted Lebesgue space $L^{s}_{w}(\Om)$, which is equivalently defined as 
\beas
g\in L^{s}_{w}(\Omega) \Longleftrightarrow \integral_{\Omega}|g(x)|^s w(x)dx < +\infty.
\enas 
As usual, when $w\equiv 1$ we  simply write $L(s,\, t)(\Om)$ instead of $L_w(s,\, t)(\Om)$.

 A function $g\in L(s,t)(\Om)$, $0<s<\infty$, $0<t\leq \infty$
is said to belong to the Lorentz-Morrey function space $\mathcal{L}^{\theta}(s,t)(\Om)$ for some $0<\theta\leq n$, if
\begin{equation*}
\norm{g}_{\mathcal{L}^{\theta}(s,t)(\Om)} := \sup_{ \substack{0<r \leq {\rm diam} (\Om), \\ z\in\Om   }}
r^{\frac{\theta-n}{s}}\norm{g}_{L(s,t)(B_{r}(z)\cap\Om)}  <+\infty.
\end{equation*}

When $\theta=n$, we have $\mathcal{L}^{\theta}(s,t)(\Om)=  L(s,t)(\Om)$. Moreover, when $s=t$ the space  $\mathcal{L}^{\theta}(s,t)(\Om)$ becomes the usual Morrey space based on $L^s$ space. 

A basic use of Lorentz spaces is to improve the classical Sobolev Embedding Theorem. For example, if $f\in W^{1,q}$ for some 
$q\in (1,n)$  then $$f\in L(nq/(n-q), q)$$ (see, e.g., \cite{Zie}), which is better than the classical result  
$$f\in L^{nq/(n-q)}=L(nq/(n-q), nq/(n-q))$$
since $L(s,t_1)\subset L(s, t_2)$ whenever $t_1\leq t_2$.  Another use of Lorentz spaces is to capture logarithmic singularities. For example,  for any $\beta>0$ we have 
$$\frac{1}{|x|^{n/s} (\log|x|)^\beta}\in L(s,t)(B_1(0)) \quad {\rm if~and~only~if~} t>\frac{1}{\beta}.$$ 
Lorentz spaces have also been used successfully  in improving regularity criteria for the full
3D Navier-Stokes system of equations (see, e.g., \cite{Soh}). 

On the other hand, Lorentz-Morrey spaces are neither rearrangement invariant spaces, nor interpolation spaces. 
They often show up in the analysis of  Schr\"odinger operators via the so-called Fefferman-Phong condition (see \cite{Fef}), or in the regularity theory of nonlinear equations of fluid dynamics.

We are now able to state the main result of the paper.
\begin{theorem}\label{main_theorem}
Suppose that $\aa$ satisfies \eqref{monotone}-\eqref{ellipticity}. Let $t\in (0, \infty]$, $q\geq p$, and let $w$ be an $A_{q/p}$ weight. There exist  constants $\tau=\tau(n,p,\La_0, \La_1)>1$ and $\ga=\ga(n,p,\La_0, \La_1, q, [w]_\infty)>0$  such that the following holds.
If $u \in W_0^{1,p}(\Om)$ is a  solution of \eqref{basicpde} in a \gflt domain $\Om$ with $[\mathcal{A}]_\tau^{R_0}\leq \ga$,
then one has the  estimate 
\beas
\|\nabla u\|_{L_w(q,t)(\Om)} \leq C \|\bff \|_{L_w(q,t)(\Om)},
\enas
where  the  constant $C = C(n,p,\La_0,\La_1, q, t, [w]_{q/p}, {\rm diam}(\Om)/R_0)$.
\end{theorem}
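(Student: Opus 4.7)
The plan is to establish the estimate via a good-$\lambda$ inequality combined with the $A_\infty$ characterization of Lemma \ref{inversedoubling}, and then distributionally integrate against the Lorentz norm.

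First, I would prove local comparison estimates. Given an interior ball $B_{2\rho}(x_0) \subset \Om$, let $v$ solve $\dv \aa(x, \nabla v) = 0$ in $B_{2\rho}$ with $v = u$ on $\partial B_{2\rho}$, and then let $w$ solve $\dv \overline{\aa}_{B_{2\rho}}(\nabla w) = 0$ with $w = v$ on $\partial B_{2\rho}$. Using the monotonicity \eqref{monotone} together with the $(\ga,R_0)$-BMO bound on $\aa$ (with a Hölder split that is the reason $\tau > 1$ is needed), one obtains
\beas
\fintegral_{B_{2\rho}} |\nabla u - \nabla w|^p dx \leq \ep \fintegral_{B_{2\rho}} |\nabla u|^p dx + C_\ep \fintegral_{B_{2\rho}} |\bff|^p dx,
\enas
with $\ep$ small when $\ga$ is small. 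For a boundary ball centered at $x_0 \in \partial \Om$, I would straighten via Remark \ref{domainremark} and repeat the argument on a flat half-ball. The classical $C^{1,\al}$ theory for the frozen-coefficient problem (on the ball or flat half-ball) provides the Lipschitz bound $\norm{\nabla w}_{L^\infty(B_\rho)}^p \leq C \fintegral_{B_{2\rho}}|\nabla w|^p dx$.

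Second, a Vitali / Calder\'on--Zygmund stopping-time decomposition on $\Om$ applied to the truncated maximal operator $\mm$ of $|\nabla u|^p$ at scale $R_0$, fed by the above comparison, yields the good-$\lam$ inequality
\beas
|\{\mm(|\nabla u|^p) > N \lam\} \cap \Om | \leq C \ep \, |\{\mm(|\nabla u|^p) > \lam\} \cap \Om| + |\{\mm(|\bff|^p) > \delta \lam\} \cap \Om|
\enas
for all $\lam > \lam_0$, with $\ep$ arbitrarily small by taking $\ga$ small. Lemma \ref{inversedoubling} converts this to a weighted good-$\lam$ statement
\beas
w(\{\mm(|\nabla u|^p) > N \lam\}) \leq C \Xi_0 \, \ep^{\Xi_1} w(\{\mm(|\nabla u|^p) > \lam\}) + w(\{\mm(|\bff|^p) > \delta \lam\}),
\enas
where $(\Xi_0, \Xi_1)$ depend only on $[w]_\infty$. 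Fixing $\ep$ so that $C \Xi_0 \ep^{\Xi_1} \leq 1/2$ then dictates how small $\ga$ must be.

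Third, I would multiply the weighted good-$\lam$ inequality by $\lam^{t/p-1}$, raise the measures to the power $t/q$, and integrate in $\lam$; the distributional formula for the Lorentz norm produces
\beas
\norm{\mm(|\nabla u|^p)}_{L_w(q/p,\, t/p)(\Om)} \leq C \norm{\mm(|\bff|^p)}_{L_w(q/p,\, t/p)(\Om)},
\enas
where the absorption of the first right-hand term on the left uses that $\mm(|\nabla u|^p) \in L^1(\Om)$ (known a priori from the natural energy estimate). Finally, since $w \in A_{q/p}$, the Hardy--Littlewood maximal operator is bounded on $L_w(q/p,\, t/p)$ (Muckenhoupt together with Marcinkiewicz interpolation in the Lorentz scale), so stripping $\mm$ from both sides yields the claimed estimate.

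The main obstacle will be the boundary comparison, where Reifenberg flatness must be used to straighten $\partial \Om$ in a way compatible with the BMO oscillation of $\aa$; calibrating the three smallness parameters (the flatness constant, the BMO parameter $\ga$, and the exponent-gap $\tau-1$) so that $\ga$ depends on $q$ and $[w]_\infty$ only through $\Xi_1$ while $\tau$ remains independent of $w$ is the delicate quantitative matter, and requires a Meyers-type self-improving reverse Hölder inequality for $|\nabla u|^p$ below the natural exponent.
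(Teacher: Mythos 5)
Your overall architecture (two-step comparison to a frozen-coefficient homogeneous problem, a Vitali/Calder\'on--Zygmund level-set decomposition, conversion to a weighted statement via the $A_\infty$ characterization, and finally stripping the maximal operator) is indeed the skeleton of the paper's argument. But there is a genuine gap at exactly the point that is the theorem's main contribution: the end-point $q=p$. You run the whole decomposition on $\mm(|\nabla u|^{p})$ and $\mm(|\bff|^{p})$, and your last step removes the maximal operator using its boundedness on $L_w(q/p,\,t/p)$ for $w\in A_{q/p}$. When $q=p$ this is $L_w(1,\,t/p)$, and the Hardy--Littlewood maximal operator is not bounded there for any weight (not even $w\equiv 1$); one only gets a weak-type $(1,1)$ bound. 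This is precisely why the earlier works \cite{M-P2, MNP} cited in the paper stop at a weak-type estimate when $q=p$. Your argument as written therefore proves only the known range $q>p$. (A secondary issue: your absorption step invokes $\mm(|\nabla u|^p)\in L^1(\Om)$, which is false in general since the maximal function of an $L^1$ function need only be in weak $L^1$; the paper avoids this by normalizing $u$ and working with truncated partial sums of the level-set series.)

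The paper's device for reaching $q=p$ is to run the entire machinery \emph{below} the natural exponent: all level sets, comparison estimates, and maximal functions are taken for $\mm(|\nabla u|^{p-\de})^{1/(p-\de)}$ and $\mm(|\bff|^{p-\de}\chi_\Om)^{1/(p-\de)}$ with a small $\de>0$ depending on $n,p,\La_0,\La_1,q,[w]_\infty$ (this is Theorem \ref{main_theorem-M}). Then the final step needs $\mm$ bounded on $L_w\bigl(q/(p-\de),\,t/(p-\de)\bigr)$, and since $q/(p-\de)\geq p/(p-\de)>1$ and $A_{q/p}\subset A_{q/(p-\de)}$, this holds even at $q=p$ for $A_1$ weights. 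The price is that the comparison estimates must be established for very weak solutions in $W^{1,p-\de}$, which is where the Iwaniec--Sbordone/Lewis theory and the authors' estimates from \cite{AN} enter; you mention a Meyers-type reverse H\"older inequality below the natural exponent only as a peripheral calibration issue, whereas it is in fact the load-bearing ingredient that must be threaded through every step of the decomposition. Without relocating the whole argument to the exponent $p-\de$, the end-point case cannot be recovered from your scheme.
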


\begin{remark}
\label{remark_main}
{\rm By Remark \ref{upper-const} below and Lemma \ref{inversedoubling}, it follows that if $\overline{\om}$ is an upper bound for $[w]_{q/p}$, i.e.,  $[w]_{q/p} \leq \overline{\omega}$, then the constants $C$ and $\gamma$ above can be chosen to depend on $\overline{\omega}$ instead of $[w]_{q/p}$ or $[w]_{\infty}$.
}
\end{remark}

Theorem \ref{main_theorem}  follows from  Theorem \ref{main_theorem-M} below (applied with $M=q$)  and the boundedness 
property of the Hardy-Littlewood maximal function on weighted spaces. 
Its main contribution is the end-point case $q=p$, which yields inequality 
\eqref{A1-bound} for all  $A_1$ weights $w$ as proposed earlier.
The case $q>p$ has been obtained in \cite{M-P2, MNP} but the proofs in those papers can only yield a weak-type bound at the end-point $q=p$.

Theorem \ref{main_theorem-M} also yields the following gradient estimate below the natural exponent for {\it very weak} solutions, i.e., distributional solutions 
that may not have  finite $L^p$ energy.
\begin{theorem}\label{Lorent_Morrey}
Suppose that $\aa$ satisfies \eqref{monotone}-\eqref{ellipticity} and let $\theta_0$ be a fixed number in $(0,n]$.  There exist   $\tau=\tau(n,p,\La_0, \La_1)>1$, $\delta=\delta(n,p, \theta_0, \La_0, \La_1)>1$, and $\ga=\ga(n,p,\theta_0,\La_0, \La_1)>0$  such that the following holds.
If $u \in W_0^{1,p-\delta}(\Om)$ is a very weak solution of \eqref{basicpde} in a \gflt domain $\Om$ with $[\mathcal{A}]_\tau^{R_0}\leq \ga$ and $\bff \in \ll^{\theta}(q,t)(\Om,\RR^n)$,
then there holds:
\bea \label{L-bound-below}
\|\nabla u\|_{\ll^{\theta}(q,t) (\Om)} \leq C \|\bff\|_{\ll^{\theta}(q,t) (\Om)}
\ena
for all $q \in (\pmd, p]$, $0 < t \leq \infty$ and $\theta_0\leq  \theta \leq n$. Here the constant $C = C(n,p,q,t,\theta_0, \La_0,\La_1,\diam(\Om)/R_0)$.
\end{theorem}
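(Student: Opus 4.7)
The plan is to deduce Theorem \ref{Lorent_Morrey} from Theorem \ref{main_theorem-M} applied with $M = q$. In its stated form that theorem presumably encodes a good-$\lambda$ / distribution function comparison between $|\nabla u|$ and a Hardy-Littlewood maximal function of $\bff$, with constants depending only on the structural data. Once such a level set inequality is available, both of the main results of the paper are obtained by different ``function space bookkeeping'': Theorem \ref{main_theorem} by pairing with the weighted norm inequality for the maximal operator on $L^q_w$, and Theorem \ref{Lorent_Morrey} by pairing with the mapping properties of the maximal operator on $\ll^\theta(q,t)$.

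First, one needs to make sense of the a priori estimate \eqref{L-bound-below} at the level of very weak solutions. I would fix approximations $\bff_k$ of $\bff$ (e.g.\ by smoothing or truncating) so that the corresponding solutions $u_k$ lie in $W^{1,p}_0(\Omega)$, establish \eqref{L-bound-below} for the pairs $(u_k,\bff_k)$ via Theorem \ref{main_theorem-M}, and then pass to the limit using the stability/uniqueness theory for very weak solutions of \eqref{basicpde} below the natural exponent in the spirit of Iwaniec--Sbordone and Lewis. The gap $\delta=\delta(n,p,\theta_0,\La_0,\La_1)$ must be chosen small enough that the self-improving exponent in the interior and boundary comparison estimates feeding into Theorem \ref{main_theorem-M} exceeds $\pmd$, so that the relevant reverse H\"older inequalities close.

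The Morrey step is then local and uniform. Fix an arbitrary ball $B_\rho(z)$ with $0<\rho\leq\diam(\Omega)$; because Theorem \ref{main_theorem-M} produces a distribution function comparison whose constants are independent of $(z,\rho)$, one can localize it to $B_\rho(z)\cap\Omega$, multiply through by the Morrey factor $\rho^{(\theta-n)/q}$ and then take the supremum in $(z,\rho)$. The boundedness of the centered maximal operator on $\ll^\theta(q,t)$ for $\theta\in[\theta_0,n]$, $q>1$ and $0<t\leq\infty$, is what allows one to transfer the resulting $\ll^\theta(q,t)$ bound on a maximal function of $\bff$ back to $\bff$ itself. The dependence $\diam(\Omega)/R_0$ enters only through the maximum number of Reifenberg scales one needs to cover $\Omega$.

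The main obstacle is the genuinely very weak regime $q\in(\pmd,p)$. There the $A_{q/p}$ machinery used to derive Theorem \ref{main_theorem} is not even defined since $q/p<1$, so the $A_1$-duality route available at $q=p$ is unavailable. Instead, I would argue directly with the good-$\lambda$ inequality combined with an Iwaniec-type Hodge decomposition/truncation that, for $u\in W^{1,\pmd}_0(\Omega)$, produces an $W^{1,p}_0(\Omega)$ comparison function whose gradient is controlled in an averaged sense by $\bff$. Verifying that this truncation is compatible with Lorentz-Morrey norms uniformly in $(z,\rho)$ and in the Reifenberg scale $R_0$, and that the flatness constant $\gamma$ and BMO exponent $\tau$ can be chosen to depend only on $n,p,\theta_0,\La_0,\La_1$, is the technical heart of the argument.
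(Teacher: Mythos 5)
You correctly identify Theorem \ref{main_theorem-M} as the starting point, but you miss the single idea that actually drives the paper's argument: the Morrey scaling is \emph{not} obtained by localizing a good-$\lambda$ inequality to balls $B_r(z)\cap\Om$, but by applying Theorem \ref{main_theorem-M} (with $M=p$, so that $\delta$ is independent of $q\in(\pmd,p]$) to the family of $A_1$ power weights
\begin{equation*}
w(x)=\min\{|x-z|^{-n+\theta-\rho},\, r^{-n+\theta-\rho}\},\qquad z\in\Om,\ 0<r\leq \diam(\Om),
\end{equation*}
whose $A_1$ constants are bounded uniformly in $(z,r)$. Integrating against these Fefferman--Phong type weights is what produces the factor $r^{(\theta-n)/q}\|\cdot\|_{L(q,t)(B_r(z)\cap\Om)}$ after the bookkeeping of \cite[Theorem 2.3]{M-P2}; the maximal function on the right-hand side of \eqref{globalbound-max} is then removed using that $g\mapsto \mm(|g|^{\pmd})^{1/(\pmd)}$ is bounded on $L^q_w$ for $w\in A_1\subset A_{q/(p-\delta)}$, which is available precisely because $q>\pmd$.

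Your alternative route has a genuine gap. The distribution-function comparison underlying Theorem \ref{main_theorem-M} (Proposition \ref{contra} iterated via Lemma \ref{vitalibdry}) is a \emph{global} statement over $\Om$ for a solution vanishing on $\partial\Om$; it does not restrict to an arbitrary sub-ball $B_\rho(z)\cap\Om$ because $u$ has no boundary data there, so "localize it to $B_\rho(z)\cap\Om$, multiply by the Morrey factor and take the supremum" is not a step you can actually perform. Relatedly, your worry that the "$A_{q/p}$ machinery is not defined for $q<p$" is misplaced: Theorem \ref{main_theorem-M} is stated for arbitrary $A_\infty$ weights and all $q\in(0,M]$, and already applies directly to very weak solutions in $W_0^{1,p-\delta}(\Om)$, so neither the approximation scheme $(u_k,\bff_k)$ nor the Hodge decomposition/truncation you propose for the range $q\in(\pmd,p)$ is needed; the latter is also left entirely unspecified and would constitute the hard part of your argument if you pursued it.
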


The proof of Theorem \ref{Lorent_Morrey} follows by first applying Theorem \ref{main_theorem-M} with $M=p$ and the weight functions 
\beas
w(x) = \min \{ |x-z|^{-n+\theta - \rho} , r^{-n+\theta-\rho} \},
\enas
for any $z\in\Om$ and $r\in (0, {\rm diam}(\Om)]$ and a fixed  $\rho \in (0,\theta)$. 
Note that $w$ is an $\aw_1$ weight with its $\aw_1$ constant $[w]_1$ being bounded from above by a constant independent of $z$ and $r$. See also Remark \ref{upper-const}.
The rest of the proof then follows verbatim as in that of \cite[Theorem 2.3]{M-P2}. We mention that the sub-natural bound \eqref{L-bound-below}
was also obtained in our earlier work \cite{AN} but with the restriction  $\theta \in [p-2\delta,n]$, and in \cite{IW} with $\theta=n$, i.e., for pure Lebesgue spaces only. Note also that the super-natural case $q>p$ has been 
obtained in \cite{M-P2, MNP}.

Unweighted estimate of the form 
\begin{equation}\label{unweighted-IW}
\int_\Om |\nabla u|^q dx \leq C \int_\Om |{\bf f}|^q dx
\end{equation}
for solutions $u$ to \eqref{basicpde} in the full sub-natural range $q\in(p-1, p)$ is currently a wide open problem (even for smooth domains and the standard $p$-Laplacian). This is essentially known as a conjecture of 
T. Iwaniec who originally stated it for $\Om=\RR^n$ and $q\in (\max\{1, p-1\}, p)$ in \cite{I}.  For the super-natural case $q\geq p$, we refer to the papers \cite{I, KZ1, KZ2}  and  \cite{BW3, CP}.
For $q\in[p-\delta,p)$ with a small $\delta>0$, see \cite{AN, IW}.

This conjecture is another motivation for us to consider 
weighted estimates of the form \eqref{A1-bound} at the natural exponent $p$. In fact, using the extrapolation theory of  Garc\'ia-Cuerva and 
Rubio de Francia (see \cite{GR} and \cite[Chapter 9]{Gra}) we see that if the weighted bound  \eqref{A1-bound} holds for all weights 
$w\in A_{\frac{p}{p-1}}$ then the unweighted bound \eqref{unweighted-IW} will follow for all $q\in(p-1, p)$. More precisely, we have the following more general result, whose complete proof will be given in the Appendix.
\begin{theorem}
\label{theorem_extrapolation}
 For $p>1$, let $\bff \in L^p(\Om,\RR^n)$ be a given vector field  and denote $u \in W_0^{1,p}(\Om)$ to be the unique weak solution to \eqref{basicpde}.
 Suppose we have that 
\bea
\label{weight11}
 \integral_{\Om} |\nabla u|^p \, v(x) \, dx \leq C([v]_{\frac{p}{p-1}}) \integral_{\Om} |\bff|^p \, v(x) \, dx 
\ena
holds for all weights $v \in \awpq{p}{p-1}$.  Then for any $p-1<q<\infty$, there holds
\bea
\label{extrapolation_result}
\integral_{\Om} |\nabla u|^{q} w(x) \, dx \leq C([w]_{\frac{q}{p-1}}) \integral_{\Om} |\bff(x)|^{q} w(x) \, dx\,  
\ena
for all weights  $w \in \awpq{q}{p-1}$.
\end{theorem}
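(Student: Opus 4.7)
The plan is to reduce Theorem \ref{theorem_extrapolation} to the classical Rubio de Francia extrapolation theorem via a simple power substitution. Setting
\beas
G := |\nabla u|^{p-1}, \qquad F := |\bff|^{p-1}, \qquad p' := \tfrac{p}{p-1},
\enas
one has $G^{p'} = |\nabla u|^p$ and $F^{p'} = |\bff|^p$, so the hypothesis \eqref{weight11} is precisely the assertion that
\beas
\integral_\Om G^{p'}\, v\, dx \,\leq\, C([v]_{p'}) \integral_\Om F^{p'}\, v\, dx \qquad \forall\, v \in \aw_{p'}.
\enas
This is exactly the hypothesis of the Rubio de Francia extrapolation theorem at base exponent $p_0 = p'$, applied to the singleton family of pairs $\{(F,G)\}$.

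I would then invoke the quantitative form of that theorem (see \cite{GR} or \cite[Chapter 9]{Gra}) to deduce
\beas
\integral_\Om G^r\, w\, dx \,\leq\, C([w]_r) \integral_\Om F^r\, w\, dx
\enas
for every $1 < r < \infty$ and every $w \in \aw_r$. Setting $r := q/(p-1)$, the constraint $r > 1$ translates to $q > p-1$, and $\aw_r = \awpq{q}{p-1}$. Substituting $G^r = |\nabla u|^q$ and $F^r = |\bff|^q$ back in then yields exactly \eqref{extrapolation_result}, with the final constant depending only on $[w]_{q/(p-1)}$.

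The entire reduction is purely algebraic once the extrapolation theorem is at hand, so the only non-routine step is the extrapolation itself, which I intend to treat as a black box. For a self-contained proof to be included in the Appendix, I would record the standard Rubio de Francia construction: form the iteration operator
\beas
\mathcal{R}h \,:=\, \sum_{k\geq 0} \frac{M^k h}{(2\|M\|_{L^r(w)\to L^r(w)})^k}
\enas
based on the Hardy--Littlewood maximal function $M$, verify that $\mathcal{R}h \in \aw_1 \subset \aw_{p'}$ with $[\mathcal{R}h]_1$ controlled by $\|M\|_{L^r(w)\to L^r(w)}$, and pair this with the usual H\"older-duality step and Buckley's sharp bound $\|M\|_{L^r(w)\to L^r(w)} \lesssim [w]_r^{1/(r-1)}$ to convert the quantitative bound $C([v]_{p'})$ on the $\aw_{p'}$-side into the desired $C([w]_r)$ on the $\aw_r$-side. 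The mild care here is in splitting the integrand via H\"older with exponents adapted to whether $r < p'$ or $r > p'$, so that the auxiliary weight built from $w$ and $\mathcal{R}h$ lies in $\aw_{p'}$ in both regimes.
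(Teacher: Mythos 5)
Your proposal is correct and is essentially the paper's own argument: the substitution $G=|\nabla u|^{p-1}$, $F=|\bff|^{p-1}$ identifies \eqref{weight11} as the hypothesis of Rubio de Francia extrapolation at base exponent $p/(p-1)$, and the paper's Appendix proof is precisely the standard Rubio de Francia construction you sketch (the iteration operator $\mathcal{R}$ built from $\mathcal{M}(|\bff|^{p-1})$ for $p-1<q<p$, and its dual version $\mathcal{R}'$ for $q>p$). No substantive difference.
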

What we obtain in this paper is the weighted bound \eqref{A1-bound} for all weights $w\in A_1$ which unfortunately is not enough for us to apply the above extrapolation theorem. However, it provides us with an alternative view on the conjecture of T. Iwaniec and gives us a different sense of how far we are from completely resolving this conjecture. Of course, one can also generalize this conjecture  by proposing  the bound \eqref{weight11} for  
all weights $v \in \awpq{p}{p-1}$.

\noindent {\bf Notation:}  Throughout the paper, we shall write $A\apprle B$ to denote $A\leq c\, B$ for a positive constant 
$c$ independent of the parameters involved. Basically, $c$ is allowed to depend only on $n, p,  \Lambda_0, \Lambda_1, \ga$ and $R_0$.

\section{Local difference estimates}\label{sec4}

In this section, we obtain certain local interior and boundary difference estimates that are essential to our global estimates later. 

\subsection{Interior estimates}
 Let $u \in W^{1,\pmd}_{0}(\Om)$ for some $\de \in (0,\min\{1,p-1\})$   be a very weak solution to the equation
\begin{equation}\label{basicpde1}
\text{div}\,\aa(x, \nabla u)=\text{div}~|{\bf f}|^{p-2}{\bf f}
\end{equation} 
in a domain $\Om$. For each ball $B_{2R}=B_{2R}(x_0) \Subset \Omega$, we let $w \in u + W_0^{1,\pmd}(B_{2R})$ be a very weak solution to the  problem 
\begin{equation}
\label{firstapprox}
\left\{ \begin{array}{rcl}
 \dv \aa(x, \nabla w)&=&0   \quad \text{in} ~B_{2R} \\ 
w&=&u  \quad \text{on}~ \partial B_{2R}.
\end{array}\right. 
\end{equation} 

For sufficiently small $\delta$, the existence of such $w$ follows from the result of  \cite[Theorem 2]{IW}. The following theorem tells more on the integrability property of $w$ and its relation to $u$ by means of a comparison estimate.

\begin{theorem}\label{interior_higher_integrability} 
Under  \eqref{monotone} and \eqref{ellipticity}, there exists a small number  $\delta_0=\delta_0(n,p,\Lambda_0, \Lambda_1)>0$ such that the following holds for any $\delta\in(0,\delta_0)$. 
Given any $u \in W_{\rm loc}^{1,\pmd}(\Om)$ solving \eqref{basicpde1} and any $w$  as in \eqref{firstapprox}, we have the following comparison estimate
\begin{equation*}
\fintegral_{B_{2R}} |\nabla u-\nabla w|^{p-\delta}\, dx  \leq C\,   \delta^{\frac{p-\delta}{p-1}}  \fintegral_{B_{2R}} |{\nabla u }|^{p-\delta}\, dx  +    \fintegral_{B_{2R}} |{\bf f}|^{p-\delta}\, dx 
\end{equation*}
if $p\geq 2$, and
\beas
\fintegral_{B_{2R}} |\nabla u-\nabla w|^{p-\delta}\, dx  & \leq C \, \delta^{p-\delta}  \fintegral_{B_{2R}} |\nabla u|^{p-\delta}\, dx  \, + \\
&  + C\bigg{(}\fintegral_{B_{2R}} |\bff|^{p-\delta} \, dx\bigg{)}^{p-1} 
\bigg{(} \fintegral_{B_{2R}} |\nabla u|^{p-\delta} \, dx \bigg{)}^{2-p} 
\enas
if $1<p<2$. Moreover, 
\begin{equation}\label{IS-pmd}
\integral_{B_{2R}} |\nabla w|^{\pmd}\,  dx  \leq C   \integral_{B_{2R}} |\nabla u|^{\pmd} \, dx,
\end{equation}
and for any ball $B_r(y)\subset B_{2R}$
\begin{equation}\label{HI-IS-L}
 \Big( \integral_{B_{r/2}(y)} |\nabla w|^{p+\de_0} \, dx \Big)^{\frac{1}{p+\de_0}} \leq C \Big( \integral_{B_{r}(y)} |\nabla w|^{p-\delta_0} \, dx. \Big)^{\frac{1}{p-\delta_0}}
\end{equation}
Here the constants $C$ depend only on $n, p, \Lambda_0$ and $\Lambda_1$.
\end{theorem}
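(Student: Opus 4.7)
The plan is to establish the three assertions in turn. Existence of the comparison solution $w \in u + W_0^{1,p-\delta}(B_{2R})$ for \eqref{firstapprox} is furnished, for all $\delta$ below a universal threshold, by the very-weak existence theorem of Lewis--Iwaniec (Theorem 2 of \cite{IW}). The higher integrability \eqref{HI-IS-L} then follows from the classical reverse H\"older / Gehring route applied to the homogeneous equation $\dv \aa(x,\nabla w)=0$: a Caccioppoli inequality combined with the Sobolev--Poincar\'e inequality yields a reverse H\"older inequality straddling the exponent $p$, and Gehring's lemma upgrades $w$ from $W^{1,p-\delta_0}$ to $W^{1,p+\delta_0}_{\rm loc}$ with $\delta_0$ depending only on $n,p,\La_0,\La_1$.

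The stability \eqref{IS-pmd} and the comparison estimate are the substantive content. Since $u$ is only in $W^{1,p-\delta}_{\rm loc}$, the natural candidate test function $v := u - w \in W_0^{1,p-\delta}(B_{2R})$ lies below the duality exponent of the equation and cannot be plugged in directly. The standard remedy is a Lipschitz truncation: for each $\lambda>0$, construct $v_\lambda \in W_0^{1,\infty}(B_{2R})$ with $\|\nabla v_\lambda\|_{L^\infty} \leq c\,\lambda$, coinciding with $v$ off a bad set $E_\lambda$ satisfying the weak-type bound
\beas
|E_\lambda| \leq \frac{C}{\lambda^{p-\delta}} \integral_{B_{2R}} |\nabla v|^{p-\delta}\, dx.
\enas
For \eqref{IS-pmd}, apply the truncation to $w - u$ and test in the homogeneous equation for $w$. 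Monotonicity \eqref{monotone} controls the good-set contribution $(|\nabla u|^2+|\nabla w|^2)^{(p-2)/2}|\nabla v|^2$ in terms of $|\nabla u|^{p-\delta}$, while the bad-set part is handled using the growth bound $|\aa(x,\xi)|\leq \La_1|\xi|^{p-1}$ together with the weak-type control of $|E_\lambda|$; a suitable choice of $\lambda$, followed by absorption, produces \eqref{IS-pmd}.

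The comparison estimate follows the same template applied to the difference of the two weak formulations. Testing with $v_\lambda$ gives
\beas
\integral_{B_{2R}} \langle \aa(x,\nabla u) - \aa(x,\nabla w),\, \nabla v_\lambda\rangle\, dx = \integral_{B_{2R}} |\bff|^{p-2}\, \bff \cdot \nabla v_\lambda\, dx.
\enas
The left side splits into a good-set contribution bounded from below by monotonicity and a bad-set contribution controlled by $|\nabla v_\lambda|\leq c\lambda$ paired with the growth of $\aa$; the right side is estimated directly via H\"older. Optimizing $\lambda$ against $|E_\lambda|$ and rebalancing through \eqref{IS-pmd} yields the factor $\delta^{(p-\delta)/(p-1)}$ in the $p\geq 2$ case. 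For $1<p<2$, \eqref{monotone} gives only the degenerate quantity $(|\nabla u|^2+|\nabla w|^2)^{(p-2)/2}|\nabla v|^2$; recovering $|\nabla v|^{p-\delta}$ from it requires a factor $(|\nabla u|+|\nabla w|)^{2-p}$ inserted through H\"older's inequality with exponents $2/(p-\delta)$ and $2/(2-p+\delta)$, and this is what produces the product structure on the right-hand side in the $p<2$ case. The main obstacle will be tracking the $\delta$-dependence carefully through the truncation step so that the decay factors $\delta^{(p-\delta)/(p-1)}$ and $\delta^{p-\delta}$ emerge uniformly in $\delta \in (0,\delta_0)$; this is where the universal threshold $\delta_0(n,p,\La_0,\La_1)$ is pinned down.
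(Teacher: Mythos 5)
First, a point of reference: the paper does not actually prove this theorem. Immediately after the statement it attributes \eqref{IS-pmd} to \cite[Theorem 2]{IW}, the higher integrability \eqref{HI-IS-L} to \cite[Theorem 1]{IW} (see also \cite{John}), and the comparison estimate to \cite[Lemma 2.8]{AN}. So the relevant comparison is with those cited proofs, all of which rest on the Iwaniec--Sbordone Hodge (div--curl) decomposition: writing $v=u-w$, one decomposes $|\nabla v|^{-\delta}\nabla v=\nabla\phi+h$ with $\phi\in W_0^{1,(p-\delta)/(1-\delta)}(B_{2R})$ and, crucially, $\|h\|_{(p-\delta)/(1-\delta)}\leq C\,\delta\,\|\nabla v\|_{p-\delta}^{1-\delta}$. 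Testing the difference of the two weak formulations with $\phi$, the term involving $\nabla u$ and $\nabla w$ on the right-hand side enters only through $h$ and hence carries the factor $\delta$; Young's inequality with the conjugate pair $\tfrac{p-\delta}{p-1}$, $\tfrac{p-\delta}{1-\delta}$ then converts this into exactly $\delta^{(p-\delta)/(p-1)}$ when $p\geq2$, and the extra H\"older step you describe for $1<p<2$ produces $\delta^{p-\delta}$ together with the product structure.

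Your Lipschitz-truncation route is genuinely different, and as written it has a gap at precisely the decisive point. With a single truncation level $\lambda$, nothing in the argument is of size $\delta$: the bound $\|\nabla v_\lambda\|_{L^\infty}\leq c\lambda$, the good-set/bad-set splitting, and the weak-type estimate $|E_\lambda|\leq C\lambda^{-(p-\delta)}\int|\nabla v|^{p-\delta}\,dx$ are all $\delta$-free, so no optimization over $\lambda$ can manufacture the factor $\delta^{(p-\delta)/(p-1)}$. The sentence ``optimizing $\lambda$ against $|E_\lambda|$ and rebalancing through \eqref{IS-pmd} yields the factor $\delta^{(p-\delta)/(p-1)}$'' is therefore unsubstantiated, and the same issue undermines the absorption step you propose for \eqref{IS-pmd}. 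A truncation-based proof is possible (this is essentially Lewis's method in \cite{John}), but it requires testing at \emph{every} level and integrating the resulting identity against the measure $\delta\lambda^{-1-\delta}\,d\lambda$ over $\lambda\in(0,\infty)$, so that the left-hand side reconstructs $|\nabla v|^{p-\delta}$ while the error terms inherit an explicit factor of order $\delta$ from integrals such as $\int_0^a\delta\lambda^{1-\delta}\,d\lambda=\tfrac{\delta}{2-\delta}a^{2-\delta}$; you would need to carry out this multi-level bookkeeping to recover the stated quantitative decay in $\delta$. The remaining ingredients of your sketch --- existence of $w$ from \cite{IW} and the reverse H\"older/Gehring derivation of \eqref{HI-IS-L} --- are sound.
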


The bound \eqref{IS-pmd} was obtained in \cite[Theorem 2]{IW}. The higher integrability result, inequality \eqref{HI-IS-L}, was proved in 
\cite[Theorem 1]{IW} (see also \cite{John}). On the other hand, the comparison estiamte above has been obtained  just recently in \cite[Lemma 2.8]{AN}.

Now with $u$ as in \eqref{basicpde} and $w$ as in \eqref{firstapprox},  we further define another function $v\in w+ W_{0}^{1,\,p}(B_{R})$
as the unique solution to the Dirichlet problem
\begin{equation}\label{secondapprox}
\left\{ \begin{array}{rcl}
 \dv  \overline{\aa}_{B_{R}}(\nabla v)&=&0   \quad \text{in} ~B_{R}, \\ 
v&=&w  \quad \text{on}~ \partial B_{R},
\end{array}\right.
\end{equation}
where $B_R=B_R(x_0)$. This equation makes sense since we have good regularity for $w$ as a consequence of Theorem \ref{interior_higher_integrability}. We shall now prove another useful interior difference estimate.

\begin{lemma}\label{BMOapprox1}
Under \eqref{monotone}-\eqref{ellipticity},  let $\de \in (0,\de_0)$, where $\de_0$ is as in Theorem \ref{interior_higher_integrability}. Let  $w$ and $v$ be as in \eqref{firstapprox} and \eqref{secondapprox}. 
For $\tau =\frac{p}{\de_0}\frac{(p+\de_0)}{ (p-1)}$, there exists a constant $C=C(n,p,\La_0,\La_1)$ such that 
\begin{align*}
\fintegral_{B_{R}} |\nabla v-\nabla w|^{\pmd}\, dx \leq C \Big(\fintegral_{B_{R}} \Upsilon(\aa,B_{R})(x)^{\tau}\, dx\Big)^{\min\{\pmd, \frac{\pmd}{p-1}\}/\tau} \times \\ \times \Big(\fintegral_{B_{2R}}|\nabla w|^{\pmd} \, dx\Big) .
\end{align*}
\end{lemma}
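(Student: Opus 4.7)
The plan is to test both weak formulations against $v-w\in W_0^{1,p}(B_R)$, use the monotonicity of $\overline{\aa}_{B_R}$ together with the oscillation bound from $\Upsilon(\aa,B_R)$ to obtain a bound on $|\nabla(v-w)|^p$, and then pass down to $L^{p-\delta}$ via H\"older combined with the reverse H\"older inequality \eqref{HI-IS-L} and Jensen.

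\textbf{Step 1 (comparison identity).} Since $v-w\in W_0^{1,p}(B_R)$, using $\varphi=v-w$ as a test function in the weak formulations $\int_{B_R}\overline{\aa}_{B_R}(\nabla v)\cdot\nabla\varphi\,dx=0$ and $\int_{B_R}\aa(x,\nabla w)\cdot\nabla\varphi\,dx=0$, subtracting, and inserting $\pm\overline{\aa}_{B_R}(\nabla w)$, gives
\beas
\int_{B_R}[\overline{\aa}_{B_R}(\nabla v)-\overline{\aa}_{B_R}(\nabla w)]\cdot\nabla(v-w)\,dx=\int_{B_R}[\aa(x,\nabla w)-\overline{\aa}_{B_R}(\nabla w)]\cdot\nabla(v-w)\,dx.
\enas
Monotonicity \eqref{monotone} (inherited by $\overline{\aa}_{B_R}$) bounds the left-hand side from below by $c\int_{B_R}(|\nabla v|^2+|\nabla w|^2)^{(p-2)/2}|\nabla(v-w)|^2\,dx$, while the definition of $\Upsilon$ bounds the right-hand side from above by $\int_{B_R}\Upsilon(\aa,B_R)(x)|\nabla w|^{p-1}|\nabla(v-w)|\,dx$.

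\textbf{Step 2 ($L^p$ difference bound).} For $p\geq 2$ I use $(|\nabla v|^2+|\nabla w|^2)^{(p-2)/2}|\nabla(v-w)|^2\geq c|\nabla(v-w)|^p$ on the left and Young's inequality at $(p,p/(p-1))$ on the right, then absorb, to obtain $\int_{B_R}|\nabla(v-w)|^p\,dx\leq C\int_{B_R}\Upsilon^{p/(p-1)}|\nabla w|^p\,dx$. For $1<p<2$, setting $X:=(|\nabla v|^2+|\nabla w|^2)^{1/2}$, I split $\Upsilon|\nabla w|^{p-1}|\nabla(v-w)|=[\Upsilon|\nabla w|^{p-1}X^{(2-p)/2}]\cdot[X^{(p-2)/2}|\nabla(v-w)|]$, apply Cauchy--Schwarz with a small parameter, absorb, and use $|\nabla w|^{2(p-1)}X^{2-p}\leq X^p$ to arrive at $\int_{B_R}X^{p-2}|\nabla(v-w)|^2\,dx\leq C\int_{B_R}\Upsilon^2 X^p\,dx$. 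The pointwise identity $|\nabla(v-w)|^p=[X^{p-2}|\nabla(v-w)|^2]^{p/2}X^{p(2-p)/2}$ together with H\"older at $(2/p,2/(2-p))$ then gives
\beas
\int_{B_R}|\nabla(v-w)|^p\,dx\leq C\Big(\int_{B_R}\Upsilon^2 X^p\,dx\Big)^{p/2}\Big(\int_{B_R}X^p\,dx\Big)^{(2-p)/2}.
\enas

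\textbf{Step 3 (H\"older, reverse H\"older, Jensen).} To split $\Upsilon$ from the gradient factor I apply H\"older with conjugate pair $((p+\delta_0)/\delta_0,(p+\delta_0)/p)$, arranging for $|\nabla w|$ (or $X$) to appear at the higher-integrability exponent $p+\delta_0$. For $p\geq 2$ this produces the $\Upsilon$ exponent $\tau=\frac{p(p+\delta_0)}{(p-1)\delta_0}$ exactly; for $1<p<2$ the natural exponent is the smaller $2(p+\delta_0)/\delta_0\leq\tau$ (equivalent to $p\leq 2$), so the power-mean inequality upgrades $\Big(\fintegral_{B_R}\Upsilon^{2(p+\delta_0)/\delta_0}\Big)^{\delta_0/(p+\delta_0)}\leq\Big(\fintegral_{B_R}\Upsilon^\tau\Big)^{2/\tau}$. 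Next, \eqref{HI-IS-L} with $y=x_0$, $r=2R$ gives $\Big(\fintegral_{B_R}|\nabla w|^{p+\delta_0}\Big)^{1/(p+\delta_0)}\leq C\Big(\fintegral_{B_{2R}}|\nabla w|^{p-\delta_0}\Big)^{1/(p-\delta_0)}$, and Jensen (using $\delta<\delta_0$) replaces $p-\delta_0$ by $p-\delta$ on the right. A final Jensen step $\fintegral_{B_R}|\nabla(v-w)|^{p-\delta}\,dx\leq\Big(\fintegral_{B_R}|\nabla(v-w)|^p\,dx\Big)^{(p-\delta)/p}$ converts the left-hand exponent; tracking constants produces the $\Upsilon$ exponent $(p-\delta)/((p-1)\tau)$ for $p\geq 2$ and $(p-\delta)/\tau$ for $1<p<2$, both equal to $\min\{p-\delta,(p-\delta)/(p-1)\}/\tau$.

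\textbf{Main obstacle.} The case $1<p<2$ is the delicate one: the monotone quadratic form degenerates in the small-gradient regime, necessitating the two-step absorb-and-interpolate argument above. A further wrinkle is that the intermediate right-hand side contains $X$ rather than just $|\nabla w|$; the $\int X^p$ factor is handled by the elementary energy identity $\int|\nabla v|^p\,dx\leq C\int|\nabla w|^p\,dx$ (from testing the equation for $v$ with $v-w$), while $\int X^{p+\delta_0}$ calls on a boundary Meyers-type higher-integrability estimate for $\nabla v$ in terms of the already higher-integrable boundary datum $w$.
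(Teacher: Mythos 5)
Your Step 1 and your $p\geq 2$ branch are essentially the paper's argument (the paper splits $\fintegral_{B_R}\Upsilon|\nabla w|^{p-1}|\nabla(w-v)|\,dx$ by a three-exponent H\"older at $(\tau,\ \tfrac{p+\de_0}{p-1},\ p)$ rather than absorbing first via Young, but the exponent bookkeeping comes out the same). The case $1<p<2$, however, contains a genuine gap. After your Cauchy--Schwarz absorption you are left with $\int_{B_R}\Upsilon^2X^p\,dx$ with $X=(|\nabla v|^2+|\nabla w|^2)^{1/2}$, and your Step 3 then requires $\bigl(\fintegral_{B_R}X^{p+\de_0}\,dx\bigr)^{1/(p+\de_0)}\lesssim\bigl(\fintegral_{B_{2R}}|\nabla w|^{p-\de}\,dx\bigr)^{1/(p-\de)}$, i.e.\ higher integrability of $\nabla v$ \emph{up to} $\partial B_R$ with the specific gain $\de_0$. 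Estimate \eqref{HI-IS-L} applies only to $w$; for $v$ you would need a global Meyers/Calder\'on--Zygmund estimate for the Dirichlet problem \eqref{secondapprox} in the ball, which you only gesture at. This is not merely a citation issue: a generic Gehring-type self-improvement gives $\nabla v\in L^{p+\sigma}(B_R)$ for \emph{some} $\sigma>0$, and if $\sigma<\de_0$ your H\"older split forces the $\Upsilon$-exponent $2(p+\sigma)/\sigma$, which may exceed $\tau$, so the power-mean inequality then runs in the wrong direction and the stated exponent $\tau$ cannot be recovered without reworking the constants.

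The paper's proof of the subquadratic case avoids $\nabla v$ in any $L^{p+\de_0}$ norm altogether, and this is the idea your argument is missing. One applies the three-exponent H\"older directly to $\fintegral_{B_R}\Upsilon|\nabla w|^{p-1}|\nabla(w-v)|\,dx$, keeping the factor $|\nabla(w-v)|$ at exponent $p$, so that only $|\nabla w|$ is placed at the higher-integrability exponent $p+\de_0$. Writing $A:=\fintegral_{B_R}|\nabla(w-v)|^p\,dx$, the subsequent H\"older at $(2/p,2/(2-p))$ on $|\nabla(v-w)|^p=\bigl[X^{p-2}|\nabla(v-w)|^2\bigr]^{p/2}X^{p(2-p)/2}$ then produces an inequality of the form $A\lesssim K\,A^{1/2}\bigl(\fintegral_{B_R}X^p\,dx\bigr)^{(2-p)/2}$, which is closed by dividing by $A^{1/2}$ (an absorption at the level of integrals, not pointwise); the leftover factor $\fintegral_{B_R}X^p\,dx$ involves $\nabla v$ only at the natural exponent $p$ and is controlled by the elementary energy bound $\int_{B_R}|\nabla v|^p\,dx\leq c\int_{B_R}|\nabla w|^p\,dx$ together with \eqref{HI-IS-L} for $w$. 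If you want to keep your pointwise-absorption structure, you must first prove (or precisely cite) the boundary higher-integrability estimate for $v$ with gain at least $\de_0$; otherwise, reorder the steps as above.
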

\begin{proof}
Using \eqref{monotone} and the fact that both $v$ and $w$ are solutions,  we have
\begin{align*}
\fintegral_{B_R}(|\nabla v|^2+ |\nabla w|^2&)^{\frac{p-2}{2}}| \nabla w- \nabla v|^2 \, dx  \\
 &\apprle   \fintegral_{B_R} \langle \overline{\aa}_{B_R}(\nabla w)-  \overline{\aa}_{B_R}(\nabla v), \nabla w-\nabla v\rangle \, dx\\
 &= C   \fintegral_{B_R} \langle \overline{\aa}_{B_R}(\nabla w)-  \aa(x,\nabla w), \nabla w-\nabla v\rangle \, dx\\
 &\apprle   \fintegral_{B_R} \Upsilon(\aa,B_R)(x) |\nabla w|^{p-1} |\nabla w-\nabla v|  \, dx.
\end{align*}

Using H\"older's inequality with exponents $p$, $\frac{p+\de_0}{p-1}$, and $\tau$ we get
\begin{equation}
 \label{random2}
\begin{aligned}
\fintegral_{B_R}(|\nabla v|^2+ &|\nabla w|^2)^{\frac{p-2}{2}}|  \nabla w- \nabla v|^2 \, dx \\
& \apprle    \left(\fintegral_{B_{R}}\Upsilon(\aa,B_{R})(x)^{\tau}\, dx\right)^{\frac{1}{\tau}} \left(\fintegral_{B_R}|\nabla w|^{p+\de_0}\,  dx\right)^{\frac{p-1}{p+\de_0}} \times \\
&  \qquad \qquad \times \left(\fintegral_{B_R} |\nabla w-\nabla v|^{p} \, dx\right)^{\frac{1}{p}}\\
& \apprle    \left(\fintegral_{B_{R}}\Upsilon(\aa,B_{R})(x)^{\tau}\, dx\right)^{\frac{1}{\tau}} \left(\fintegral_{B_{2R}}|\nabla w|^{p-\delta}\,  dx\right)^{\frac{p-1}{p-\delta}} \times \\
&  \qquad \qquad \times \left(\fintegral_{B_R} |\nabla w-\nabla v|^{p} \, dx\right)^{\frac{1}{p}},\\
%
\end{aligned}
\end{equation}
where the last inequality follows from \eqref{HI-IS-L} of Theorem \ref{interior_higher_integrability}. 

Thus for $p\geq 2$, using pointwise estimate $$|\nabla w- \nabla v|^p \leq (|\nabla v|^2+ |\nabla w|^2)^{\frac{p-2}{2}}| \nabla w- \nabla v|^2,$$ we find
$$\left(\fintegral_{B_R} |\nabla w-\nabla v|^{p} \, dx\right)^{\frac{p-1}{p}} \apprle
    \left(\fintegral_{B_{R}}\Upsilon(\aa,B_{R})^{\tau}\, dx\right)^{\frac{1}{\tau}} \left(\fintegral_{B_{2R}}|\nabla w|^{p-\delta}\,  dx\right)^{\frac{p-1}{p-\delta}}.$$

By H\"older's inequality this yields the desired estimate in the case $p\geq 2$.

For $1<p<2$ we write
\beas
|\nabla v-\nabla w|^{p}= (|\nabla v|^2+ |\nabla w|^2)^{\frac{(p-2)p}{4}} |\nabla w- \nabla v|^p (|\nabla v|^2+ |\nabla w|^2)^{\frac{(2-p)p}{4}},
\enas 
and apply  H\"older's inequality with exponents $\frac{2}{p}$ and $\frac{2}{2-p}$ to obtain
\begin{align*}
\fintegral_{B_R}| \nabla w- \nabla v|^p \, dx &\leq \left(\fintegral_{B_R}(|\nabla v|^2+ |\nabla w|^2)^{\frac{p-2}{2}}|  \nabla w- \nabla v|^2 \, dx\right)^{\frac{p}{2}}\times \nonumber\\
&\qquad  \times\left(\fintegral_{B_R}(|\nabla v|^2+ |\nabla w|^2)^{\frac{p}{2}} \,dx\right)^{\frac{2-p}{2}} \nonumber \\
\label{equn1}&\apprle    \left(\fintegral_{B_{R}}\Upsilon(\aa,B_{R})^{\tau}\, dx\right)^{\frac{p}{2\tau}} \left(\fintegral_{B_{2R}}|\nabla w|^{p-\delta}\, dx\right)^{\frac{(p-1)p}{(p-\delta)2}}\times \\
&\qquad \times\left(\fintegral_{B_R} |\nabla w-\nabla v|^{p}\, dx\right)^{\frac{1}{2}}\left(\fintegral_{B_{R}}|\nabla w|^{p}\, dx\right)^{\frac{2-p}{2}}. \nonumber
\end{align*}
Here we used \eqref{random2} and the easy energy bound $\int_{B_R} |\nabla v|^pdx \leq c \int_{B_R}|\nabla w|^p dx$ in 
the last inequality.
Using \eqref{HI-IS-L} of Theorem \ref{interior_higher_integrability}, this yields
\begin{align*}
\fintegral_{B_R}| \nabla w- \nabla v|^p \, dx  &\apprle    \left(\fintegral_{B_{R}}\Upsilon(\aa,B_{R})^{\tau}\, dx\right)^{\frac{p}{\tau}} \left(\fintegral_{B_{2R}}|\nabla w|^{p-\delta}\, dx\right)^{\frac{p}{p-\delta}}.
\end{align*}

Now an application of H\"older's inequality  gives the desired estimate in the case $1<p<2$.
%
\end{proof}

\begin{corollary} \label{mainlocalestimate-interior} 

Under \eqref{monotone}-\eqref{ellipticity},  let $\tau =\frac{p}{\de_0}\frac{(p +\de_0)}{ (p-1)}$ and  $\de \in (0,\de_0)$, where $\de_0$ is   as in Theorem 
 \ref{interior_higher_integrability}. . 
Then for any $\epsilon>0$, there exists  $\ga=\ga(\epsilon)>0$ such that if $u\in W^{1,\, \pmd}_0(\Om)$ is a
very weak solution of  \eqref{basicpde} satisfying
\begin{gather*}
\fintegral_{B_{2R}}|\nabla u|^{\pmd}\, dx \leq 1,\, \fintegral_{B_{2R}} |{\bf f}|^{\pmd}\, dx \leq \ga^{\pmd}, {\rm ~and~}
\fint_{B_R}\Upsilon(\mathcal{A}, B_R)^\tau dx \leq \ga^{\tau}, 
\end{gather*}
%
for a ball $B_{2R} \Subset \Om$, then there exists $v\in W^{1,\, p}(B_R)\cap W^{1,\, \infty}(B_{R/2})$ such that 
$$\fintegral_{B_{R}} |\nabla u-\nabla v|^{\pmd}dx \leq \epsilon^{\pmd}, {\rm ~and~} \norm{\nabla v}_{L^{\infty}(B_{R/2})}\leq C_0 = C_0(n,p,\La_0,\La_1).$$
\end{corollary}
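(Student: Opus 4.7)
The plan is to take $v$ to be precisely the frozen-coefficient solution defined in \eqref{secondapprox}, with $w$ the intermediate homogeneous solution from \eqref{firstapprox}; the conclusion then follows by chaining the two comparison estimates already developed together with a standard interior Lipschitz bound for $v$. Applying Theorem \ref{interior_higher_integrability} to the pair $(u,w)$ and inserting the hypotheses $\fintegral_{B_{2R}}|\nabla u|^{\pmd}\,dx\leq 1$ and $\fintegral_{B_{2R}}|\bff|^{\pmd}\,dx\leq \gamma^{\pmd}$ yields
\[
\fintegral_{B_{2R}}|\nabla u-\nabla w|^{\pmd}\,dx \;\leq\; C\bigl(\delta^{\sigma_1}+\gamma^{\sigma_2}\bigr)
\]
for positive exponents $\sigma_i$ depending only on $p$ (with slightly different expressions in the $p\geq 2$ and $1<p<2$ cases), together with the a priori bound $\fintegral_{B_{2R}}|\nabla w|^{\pmd}\,dx\leq C$ from \eqref{IS-pmd}. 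Lemma \ref{BMOapprox1} applied to $(w,v)$, combined with the hypothesis $\fintegral_{B_R}\Upsilon(\aa,B_R)^{\tau}\,dx\leq \gamma^{\tau}$ and the bound just obtained, then produces
\[
\fintegral_{B_R}|\nabla w-\nabla v|^{\pmd}\,dx \;\leq\; C\gamma^{\sigma_3}
\]
for some $\sigma_3>0$ depending only on $n,p$. Combining the two via $|a-c|^{\pmd}\lesssim|a-b|^{\pmd}+|b-c|^{\pmd}$ yields $\fintegral_{B_R}|\nabla u-\nabla v|^{\pmd}\,dx\leq C(\delta^{\sigma_1}+\gamma^{\sigma_2}+\gamma^{\sigma_3})$.

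For the Lipschitz estimate, the key observation is that $v$ satisfies the autonomous equation $\operatorname{div}\overline{\aa}_{B_R}(\nabla v)=0$ on $B_R$, whose nonlinearity inherits \eqref{monotone}--\eqref{ellipticity} with the same constants $\Lambda_0,\Lambda_1$. The classical interior $C^{1,\alpha}$ theory for $p$-Laplacian-type equations (DiBenedetto--Tolksdorf--Uhlenbeck, Lieberman) therefore gives
\[
\|\nabla v\|_{L^{\infty}(B_{R/2})} \;\leq\; C\Bigl(\fintegral_{B_R}|\nabla v|^{\pmd}\,dx\Bigr)^{1/\pmd},
\]
while a standard energy test against $v-w\in W^{1,p}_0(B_R)$ dominates the right-hand side by a constant multiple of $\fintegral_{B_R}|\nabla w|^{\pmd}\,dx$, itself bounded by a universal constant via \eqref{IS-pmd}. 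This delivers the asserted bound $\|\nabla v\|_{L^{\infty}(B_{R/2})}\leq C_0$ with $C_0=C_0(n,p,\Lambda_0,\Lambda_1)$.

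To finish, one absorbs the unmodulated term $C\delta^{\sigma_1}$ by first shrinking $\delta$ within $(0,\delta_0)$ so that $C\delta^{\sigma_1}\leq \epsilon^{\pmd}/2$, and then selects $\gamma=\gamma(\epsilon)$ small enough to force $C\gamma^{\sigma_2}+C\gamma^{\sigma_3}\leq \epsilon^{\pmd}/2$. The main obstacle is this final calibration, which is slightly more delicate than the phrasing of the statement suggests (since $\gamma$ implicitly depends on $\delta$ as well); the other point requiring some care is the Lipschitz step, which leans on the nontrivial interior $C^{1,\alpha}$ theory for equations of $p$-Laplacian type. Everything else is a routine application of H\"older and triangle inequalities to the two comparison estimates already at our disposal.
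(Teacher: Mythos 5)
Your proposal is correct and follows essentially the same route as the paper: take $v$ from \eqref{secondapprox}, chain the comparison estimate of Theorem \ref{interior_higher_integrability} with Lemma \ref{BMOapprox1}, and use Tolksdorf's interior regularity together with the energy comparison, \eqref{HI-IS-L}, and \eqref{IS-pmd} for the Lipschitz bound. Your observation that the unmodulated term $C\delta^{\sigma_1}$ forces one to also shrink $\delta$ (so that, strictly speaking, the admissible range of $\delta$ depends on $\epsilon$, as is made explicit in the boundary analogue, Corollary \ref{mainlocalestimate-boundary}) is a fair reading of an imprecision in the statement.
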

\begin{proof} Let $w$ and $v$ be as in \eqref{firstapprox} and \eqref{secondapprox} respectively. Since we have $v \in W^{1,p}(B_R)$, standard regularity theory gives (see, e.g., \cite{Tol})
\begin{eqnarray*}
\norm{\nabla v}_{L^{\infty}(B_{R/2})}^p&\apprle& \fintegral_{B_{R}}|\nabla v|^p dx \apprle \fintegral_{B_{R}}|\nabla w|^p dx \\
&\apprle& \left( \fintegral_{B_{2R}}|\nabla w|^{\pmd} dx\right)^{\frac{p}{\pmd}} \apprle \left( \fintegral_{B_{2R}}|\nabla u|^{\pmd} dx\right)^{\frac{p}{\pmd}} \leq C_0.
\end{eqnarray*}
Here we applied  Theorem \ref{interior_higher_integrability}. 
The proof of the corollary now follows from the comparison estimate in Theorem \ref{interior_higher_integrability} and  Lemma \ref{BMOapprox1}.
\end{proof}

\subsection{ Boundary estimates}  
We now consider the corresponding local  estimates near the boundary.  Suppose that the domain $\Om$ is \gflt with $\ga < 1/2$. Let $x_0 \in \partial \Om$, $R \in (0,R_0/20)$, and let $u \in W_0^{1,\pmd} (\Om)$ be a very weak solution to \eqref{basicpde} for some $\de \in (0,\min\{1,p-1\})$. On $\Om_{20R}=\Om_{20R} (x_0) = B_{20R}(x_0) \cap \Om$, we let $w \in u+ W_0^{1,\pmd} (\Om_{20R}(x_0))$ be a very weak solution to the  problem: 
\begin{equation}\label{wapprox}
\left\{ \begin{array}{rcl}
 \dv \aa(x, \nabla w)&=&0   \quad \text{in} ~\Om_{20R}, \\ 
w&=&u  \quad \text{on}~ \partial \Om_{20R}.
\end{array}\right.
\end{equation}
We now extend $u$ by zero to $\RR^n\setminus \Om$ and then extend $w$ by $u$ to $\RR^n\setminus \Om_{20R}(x_0)$.  Analogous to Theorem \ref{interior_higher_integrability}, we have the following boundary  counterpart. 

\begin{theorem} \label{higher-integrability-boundary}
Under  \eqref{monotone} and \eqref{ellipticity}, there exists a small number  $\tilde{\delta}_0=\tilde{\delta}_0(n,p,\Lambda_0, \Lambda_1, \gamma)>0$ such that the following holds for any $\delta\in(0,\tilde{\delta}_0)$. 
For any $u \in W_0^{1,\pmd}(\Om)$ solving \eqref{basicpde} and any $w$  as in \eqref{wapprox}, after extending $\bff$ and $u$ outside $\Om$ by zero and $w$ by $u$ outside $\Om_{20R}$, we have the following comparison estimate
\begin{equation*}
\fintegral_{B_{20R}} |\nabla u-\nabla w|^{p-\delta}\, dx  \leq C\,   \delta^{\frac{p-\delta}{p-1}}  \fintegral_{B_{20R}} |{\nabla u }|^{p-\delta}\, dx  +    \fintegral_{B_{20R}} |{\bf f}|^{p-\delta}\, dx 
\end{equation*}
if $p\geq 2$, and
\beas
\fintegral_{B_{20R}} |\nabla u-\nabla w|^{p-\delta}\, dx  & \leq C  \, \delta^{p-\delta}  \fintegral_{B_{20R}} |\nabla u|^{p-\delta}\, dx  \, + \\
& + C\bigg{(}\fintegral_{B_{20R}} |\bff|^{p-\delta} \, dx\bigg{)}^{p-1} 
\bigg{(} \fintegral_{B_{20R}} |\nabla u|^{p-\delta} \, dx \bigg{)}^{2-p} 
\enas
if $1<p<2$. Moreover, 
\begin{equation}\label{pmd-boundary}
\integral_{B_{20R}} |\nabla w|^{\pmd} dx \leq C \integral_{B_{20R}}  |\nabla u|^{\pmd} dx,
\end{equation}
and for any ball $B_r(y)$ such that  $B_{7r}(y)\subset B_{20R}$

\begin{equation}\label{higher-boundary}
\bigg{(} \fintegral_{B_{r/2}(y)} |\nabla w|^{p+\tilde{\delta}_0} \, dx\bigg{)}^\frac{1}{p+\tilde{\delta}_0} \leq C \bigg{(} \fintegral_{B_{7r}(y)} |\nabla w|^{p-\tilde{\delta}_0 } \, dx \bigg{)}^{\frac{1}{p-\tilde{\delta}_0}}. 
\end{equation}
 Here the constants $C=C(n, p, \Lambda_0,\Lambda_1, \gamma)$.
\end{theorem}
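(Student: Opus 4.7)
\medskip

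\noindent\textbf{Proof proposal.} The strategy is to mirror the three ingredients of Theorem \ref{interior_higher_integrability}, replacing the interior cut-offs and rescalings by ones that are compatible with the Reifenberg flatness of $\partial\Om$. After extending $u$ by zero and $w$ by $u$, both functions lie in $W^{1,p-\delta}(B_{20R})$ and vanish outside $\Om\cap B_{20R}$. The Reifenberg condition with $\gamma<1/2$, together with Remark \ref{domainremark}, allows us to sandwich $\Om\cap B_r(y)$ between two half-balls at every boundary point and every scale $r\le R_0$; this is what lets boundary and interior balls be treated in a uniform fashion.

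First I would establish the energy bound \eqref{pmd-boundary} and the reverse H\"older inequality \eqref{higher-boundary}. For weak solutions ($\delta=0$) these are classical: a Caccioppoli estimate on balls $B_r(y)$ with $B_{7r}(y)\subset B_{20R}$, using the zero extension of $w$ outside $\Om_{20R}$ as a test function against a suitable cut-off, gives
\beas
\fintegral_{B_{r/2}(y)}|\nabla w|^p\,dx\leq C\,\fintegral_{B_{7r}(y)}|\nabla w|^{p\theta}\,dx\Big)^{1/\theta}+\text{lower order},
\enas
for some $\theta<1$; Gehring's lemma then promotes this to \eqref{higher-boundary} (with $p+\tilde\de_0$ in place of $p$). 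The factor of $7$ in the outer ball is precisely what is needed to accommodate a Reifenberg-flat boundary portion inside $B_{7r}(y)$ on which $w\equiv u$ has already been extended by zero. To pass from weak to very weak solutions, I would invoke the Lewis-type Lipschitz truncation argument of Iwaniec--Sbordone (as used in \cite{IW}) adapted to Reifenberg domains: this gives the existence of $w$ and the a priori bound \eqref{pmd-boundary} once $\tilde\de_0$ is chosen small relative to $n, p, \Lambda_0, \Lambda_1, \gamma$.

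Next I would derive the comparison estimate. Subtracting the equations for $u$ and $w$ and testing against a Lipschitz truncation of $u-w$ (which vanishes on $\partial\Om_{20R}$, so is admissible up to its truncation level $\lambda$), we get
\beas
\integral_{\{|\nabla u-\nabla w|\leq\lambda\}}\langle\aa(x,\nabla u)-\aa(x,\nabla w),\nabla u-\nabla w\rangle\,dx
\leq C\lambda\,|\{|\nabla u-\nabla w|>\lambda\}|^{1-1/p}\|\bff\|_p^{p-1}+\cdots,
\enas
and then use monotonicity \eqref{monotone}, H\"older's inequality, and the sharp choice of $\lambda$ in terms of $\delta$ (following \cite[Lemma 2.8]{AN}) to absorb the gradient terms and produce the factor $\delta^{(p-\delta)/(p-1)}$ when $p\geq 2$, respectively $\delta^{p-\delta}$ when $1<p<2$. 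The different power of $\delta$ between the two ranges is dictated, as in the interior case, by the lower bound $(|\xi|^2+|\eta|^2)^{(p-2)/2}|\xi-\eta|^2$ forcing one to interpolate with an additional factor of $|\nabla u|^{p-1}$ in the subquadratic regime; hence the appearance of the term $\bigl(\fint |\bff|^{p-\delta}\bigr)^{p-1}\bigl(\fint|\nabla u|^{p-\delta}\bigr)^{2-p}$.

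The main obstacle I expect is controlling the Lipschitz truncation at the Reifenberg-flat boundary: the truncation $T_\lambda(u-w)$ must still lie in $W_0^{1,p-\delta}(\Om_{20R})$, and the good-level set must satisfy a Calder\'on--Zygmund type covering compatible with the non-smooth $\partial\Om_{20R}$. This is where the hypothesis $\gamma<1/2$ and the two-sided sandwich of Remark \ref{domainremark} enter decisively: they provide uniform measure-density of $\Om^c$ in boundary balls, which in turn yields a uniform Poincar\'e inequality for the zero extension and hence the required weak-$L^{p-\delta}$ estimate on $|\nabla(u-w)|$ (see \cite{IW, AN}). Once this truncation is in place, the remainder of the argument is a book-keeping of exponents parallel to the interior case, and $\tilde\de_0$ is fixed small enough that the higher-integrability exponents produced by Gehring dominate all error terms.
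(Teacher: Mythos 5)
The paper offers no proof of this theorem: it is justified entirely by citation, with the existence of $w$ and the energy bound \eqref{pmd-boundary} attributed to \cite[Corollary 3.5]{AN}, the reverse H\"older inequality \eqref{higher-boundary} to \cite[Theorem 3.7]{AN}, and the comparison estimate to \cite[Lemma 3.10]{AN} (in fact those results are proved there for a larger class of domains satisfying a uniform density condition, of which \gflt domains with $\gamma<1/2$ are a special case). Your outline correctly reconstructs the three ingredients behind those citations: a boundary Caccioppoli estimate for the zero extension combined with Gehring's lemma, using the uniform measure density of $\Om^c$ in boundary balls supplied by Remark \ref{domainremark}; the Iwaniec--Sbordone/Lewis very-weak-solution machinery to get existence and \eqref{pmd-boundary} below the natural exponent; and a monotonicity-based comparison argument whose exponent bookkeeping ($\delta^{(p-\delta)/(p-1)}$ versus $\delta^{p-\delta}$ with the extra factor $(\fint|\nabla u|^{p-\delta})^{2-p}$) you explain correctly in terms of the degenerate versus singular form of \eqref{monotone}.

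Two caveats. First, as a self-contained proof your sketch is circular at its hardest point: the construction of admissible test functions for solutions that are merely in $W^{1,p-\delta}$ is exactly the content of \cite{IW, AN}, and you invoke those references rather than carry it out; note also that \cite{IW} and \cite[Lemma 3.10]{AN} run this step via the Hodge decomposition of $|\nabla(u-w)|^{-\delta}\nabla(u-w)$ (the factor $\delta$ coming from the size of the divergence-free remainder) rather than via Lipschitz truncation, though the truncation route is a legitimate alternative. Second, your claimed source of the loss from $B_{r/2}$ to $B_{7r}$ is only heuristic; in \cite{AN} the factor $7$ is a fixed geometric constant arising from the covering/chaining in the up-to-the-boundary Gehring argument. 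Neither point is a mathematical error, but both mean the proposal is a roadmap to the cited proofs rather than an independent argument.
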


Theorem \ref{higher-integrability-boundary}  was actually proved for a much larger class of domains and more general nonlinearities in \cite{AN}. More explicitly, the existence of $w$ and the bound \eqref{pmd-boundary} are contained in \cite[Corollary 3.5]{AN}; the higher integrability estimate \eqref{higher-boundary}
is obtained in \cite[Theorem 3.7]{AN}; and 
the comparison estimate is the result of \cite[Lemma 3.10]{AN}.

With $x_0 \in \partial\Om$ and $0 < R  < R_0/20$ as above, we now set $\rho = R(1-\ga)$. 
By Remark \eqref{domainremark},  there exists  a coordinate system $\{z_1,z_2,\ldots,z_n\}$ with the origin $0 \in \Om$ such that in this coordinate system $x_0 = (0,\ldots,0,-\rho \ga/(1-\ga)) \in \partial \Om$ and
\bea
\label{geometry_reifenberg}
B_{\rho}^+ (0) \subset \Om \cap B_{\rho} (0) \subset B_{\rho} (0) \cap \{(z_1,z_2,\ldots,z_n): z_n > -4\rho \ga\}.
\ena
Here recall that $B_{\rho}^+ (0) = B_{\rho}(0) \cap \{ (z_1, \dots,z_n): z_n > 0\}$ denotes an upper half ball in the corresponding coordinate system.

 With this $\rho$ and thanks to the existence and regularity of $w$ in Theorem \ref{higher-integrability-boundary},  we define another function $v\in w+ W_{0}^{1,\,p}(\Om_{\rho}(0))$
as the unique solution to the Dirichlet problem
\begin{equation}\label{vapprox}
\left\{ \begin{array}{rcl}
 \text{div}~ \overline{\mathcal{A}}_{B_{\rho}}(\nabla v)&=&0   \quad \text{in} ~\Om_{\rho}(0), \\ 
v&=&w  \quad \text{on}~ \partial \Om_{\rho}(0). 
\end{array}\right.
\end{equation}

We then set $v$ to be equal to $w$ in $\RR^n \setminus \omegarho (0)$.  The following boundary difference estimate can be proved 
in a way just similar to the proof of Lemma \ref{BMOapprox1}.

\begin{lemma}\label{BMOapprox2}
Under \eqref{monotone} and \eqref{ellipticity},  let $\de \in (0, \tilde{\de}_0)$, where $\tilde{\de}_0$ is  in Theorem 
\ref{higher-integrability-boundary}. Let $w$ and $v$ be as in \eqref{wapprox} and \eqref{vapprox}. 
For $\tau =\frac{p}{\tilde{\de}_0}\frac{(p+\tilde{\de}_0)}{ (p-1)}$, there exists a constant $C=C(n,p,\La_0,\La_1, \gamma)$ such that 
\begin{align*}
\fintegral_{B_{\rho}(0)} |\nabla v-\nabla w|^{\pmd}\, dx \leq C \left(\fintegral_{B_{\rho}(0)} \Upsilon(\aa,B_{\rho}(0))(x)^{\tau}\, dx\right)^{\min\{\pmd, \frac{\pmd}{p-1}\}/\tau} \times \\ \times \left(\fintegral_{B_{14\rho}(0)}|\nabla w|^{\pmd} \, dx\right) .
\end{align*}
%
%
\end{lemma}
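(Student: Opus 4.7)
The plan is to adapt the interior proof of Lemma \ref{BMOapprox1} to the boundary setting by replacing $B_R$ with $\Omega_{\rho}(0)$ and using the boundary version of higher integrability from Theorem \ref{higher-integrability-boundary}. Since $v-w \in W_0^{1,p}(\Omega_{\rho}(0))$ (this agrees with the extensions fixed in the statement, because $v=w$ on $\RR^n\setminus\Omega_{\rho}(0)$), it is an admissible test function for both weak formulations \eqref{wapprox} and \eqref{vapprox}. Subtracting these two identities yields
\begin{equation*}
\int_{\Omega_{\rho}(0)}\langle \overline{\aa}_{B_{\rho}}(\nabla v) - \overline{\aa}_{B_{\rho}}(\nabla w),\nabla v - \nabla w\rangle \, dx = \int_{\Omega_{\rho}(0)}\langle \aa(x,\nabla w) - \overline{\aa}_{B_{\rho}}(\nabla w),\nabla v - \nabla w\rangle\, dx.
\end{equation*}
Monotonicity \eqref{monotone} controls the left-hand side from below by $c\int(|\nabla v|^2+|\nabla w|^2)^{(p-2)/2}|\nabla v-\nabla w|^2\,dx$, while the definition of $\Upsilon(\aa,B_{\rho}(0))$ together with \eqref{ellipticity} bounds the right-hand integrand by $\Upsilon(\aa,B_{\rho}(0))(x)\,|\nabla w|^{p-1}\,|\nabla v-\nabla w|$.

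I would next apply H\"older's inequality with the three exponents $\tau$, $\frac{p+\tilde{\de}_0}{p-1}$, and $p$, exactly as in the interior case. The middle factor, an $L^{p+\tilde{\de}_0}$-norm of $|\nabla w|^{p-1}$ on $B_{\rho}(0)$, is then converted to an $L^{\pmd}$-quantity on $B_{14\rho}(0)$ via the boundary reverse-H\"older bound \eqref{higher-boundary} applied with $y = 0$ and $r = 2\rho$. The geometric inclusion $B_{14\rho}(0) \subset B_{20R}(x_0)$ required to invoke \eqref{higher-boundary} holds because $|x_0| = \rho\ga/(1-\ga) < \rho$ (using $\ga<1/2$) and $\rho = R(1-\ga) \leq R$, so $B_{14\rho}(0) \subset B_{15\rho}(x_0) \subset B_{20R}(x_0)$; the downgrade from $L^{p-\tilde{\de}_0}$ to $L^{\pmd}$ on $B_{14\rho}(0)$ is then just H\"older's inequality since $\de\leq\tilde{\de}_0$.

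Finally I would split cases as in Lemma \ref{BMOapprox1}. For $p\geq 2$, the pointwise inequality $|\nabla v - \nabla w|^p \leq (|\nabla v|^2 + |\nabla w|^2)^{(p-2)/2}|\nabla v - \nabla w|^2$ immediately closes the argument. For $1<p<2$, I would factor
\begin{equation*}
|\nabla v - \nabla w|^p = \bigl[(|\nabla v|^2 + |\nabla w|^2)^{(p-2)/2}|\nabla v - \nabla w|^2\bigr]^{p/2}\,(|\nabla v|^2 + |\nabla w|^2)^{p(2-p)/4},
\end{equation*}
apply H\"older with exponents $2/p$ and $2/(2-p)$, and use the trivial energy bound $\int_{\Omega_{\rho}(0)}|\nabla v|^p\,dx \leq C\int_{\Omega_{\rho}(0)}|\nabla w|^p\,dx$ obtained by testing \eqref{vapprox} with $v-w$. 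A concluding H\"older step passes from the resulting $L^p$ norm on the left back to the $L^{\pmd}$ norm demanded by the statement.

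The main technical obstacle, beyond the routine adaptation of the H\"older bookkeeping, lies in justifying that the test function/approximation machinery is valid up to the boundary: this rests on the Reifenberg flatness assumption $\ga<1/2$ through the setup \eqref{geometry_reifenberg}, on the regularity of $w$ afforded by Theorem \ref{higher-integrability-boundary}, and on the existence of $v$ solving \eqref{vapprox} on the irregular domain $\Omega_{\rho}(0)$. Once these ingredients and the geometric inclusion above are in place, the proof transcribes the interior argument line for line, with the constant $C$ now also depending on $\ga$ through the boundary higher integrability bound.
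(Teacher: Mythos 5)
Your proposal is correct and follows exactly the route the paper intends: the paper gives no separate proof of Lemma \ref{BMOapprox2}, stating only that it is proved "in a way just similar to" Lemma \ref{BMOapprox1}, and your adaptation (testing with $v-w\in W_0^{1,p}(\Omega_\rho(0))$, the three-exponent H\"older step, and the $p\geq2$ versus $1<p<2$ case split) is that argument verbatim. The one piece of genuinely new content — the geometric verification that $B_{14\rho}(0)\subset B_{20R}(x_0)$ so that \eqref{higher-boundary} applies with $y=0$, $r=2\rho$, which is precisely why the radius $14\rho$ appears in the statement — is handled correctly.
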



As the boundary of $\Om$ can be very irregular, the $L^\infty$-norm of $\nabla v$ up to the boundary of $\Om$ could be unbounded. Therefore, we consider another equation: 
\begin{eqnarray}\label{perturbedpdeboundary}
\left\{ \begin{array}{rcl}
 \text{div}~ \overline{\mathcal{A}}_{B_{\rho}}(\nabla V)&=0 \quad \text{in} ~B^{+}_{\rho}(0), \\
V &= 0 \quad\text { on $T_{\rho}$},
\end{array}\right.
\end{eqnarray}
where $T_{\rho}$ is the flat portion of $\partial B^{+}_{\rho}(0)$. A function $V \in W^{1,p}(B_{\rho}^+ (0))$ is a weak solution of \eqref{perturbedpdeboundary} 
if its zero extension to $B_{\rho}(0)$ belongs to $W^{1,p} (B_{\rho} (0))$ and if 
\beas
\integral_{B_{\rho}^+ (0)} \overline{\mathcal{A}}_{B_{\rho}} (\nabla V) \cdot \nabla \phi \, dx = 0
\enas
for all $\phi \in W_0^{1,p} (B_{\rho}^+ (0)) $.

We shall need the following key perturbation result obtained earlier in \cite[Theorem 2.12]{NCP}. 

\begin{theorem}[\cite{NCP}] 
\label{leiberman_analog_boundary}
Suppose that $\aa$ satisfies \eqref{monotone} and \eqref{ellipticity}. For any $\ep>0$, there exists a small $\ga = \ga(n,p,\La_0,\La_1,\ep)>0$ such that if $v \in W^{1,p}(\omegarho (0))$ is a solutions of \eqref{vapprox} under the geometric setting \eqref{geometry_reifenberg}, then there exists a weak solution $V \in W^{1,p} (B_{\rho}^+ (0))$ of \eqref{perturbedpdeboundary} whose zero extension to $B_{\rho}(0)$ satisfies 
\beas
\| \nabla V \|^p_{L^{\infty} (B_{\rho/4}(0)) } \leq C \fintegral_{B_{\rho}(0)} |\nabla v|^p \, dx,
\enas
with $C = C(n,p,\La_0,\La_1)$ and
\beas
\fintegral_{B_{\rho/8}(0)} |\nabla v - \nabla V|^p \, dx \leq \ep^p \fintegral_{B_{\rho}(0)} |\nabla v|^p \, dx.
\enas

\end{theorem}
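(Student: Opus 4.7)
The plan is a compactness-contradiction argument followed by a boundary Lipschitz bound of Lieberman--Tolksdorf type. First I would normalize by rescaling $\tilde v(y) = v(\rho y)/(\rho\la)$ with $\la = \bigl(\fintegral_{B_\rho(0)}|\nabla v|^p\,dx\bigr)^{1/p}$ (the case $\la=0$ is trivial), reducing to $\rho=1$, $\fintegral_{B_1(0)}|\nabla v|^p\,dx=1$, and an autonomous nonlinearity satisfying \eqref{monotone}--\eqref{ellipticity}. The target becomes: find $V$ solving \eqref{perturbedpdeboundary} on $B_1^+(0)$ with $\|\nabla V\|_{L^\infty(B_{1/4}^+)}^p \leq C(n,p,\La_0,\La_1)$ and $\fintegral_{B_{1/8}(0)}|\nabla v-\nabla V|^p\,dx\leq\ep^p$, provided $\ga$ is chosen small enough.

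Suppose the conclusion fails. Then there exist $\ep_0>0$, a sequence $\ga_k\downarrow 0$, Reifenberg flat sets $\Om_k$ satisfying \eqref{geometry_reifenberg}, autonomous nonlinearities $\overline{\aa}_k$ obeying \eqref{monotone}--\eqref{ellipticity}, and solutions $v_k$ of $\dv\overline{\aa}_k(\nabla v_k)=0$ on $\Om_k\cap B_1$ with $\fintegral_{B_1}|\nabla v_k|^p\,dx\leq 1$, such that no admissible $V$ achieves the $L^p$-closeness on $B_{1/8}$. I extend each $v_k$ by zero outside $\Om_k$. The higher integrability \eqref{higher-boundary} furnishes a uniform $W^{1,p+\tilde{\de}_0}(B_1)$ bound; extracting subsequences yields $v_k\rightharpoonup V_\infty$ weakly in $W^{1,p+\tilde{\de}_0}$ and $\overline{\aa}_k\to\overline{\aa}_\infty$ locally uniformly on $\RR^n\setminus\{0\}$ by Ascoli--Arzel\`a (equicontinuity via \eqref{ellipticity}), with $\overline{\aa}_\infty$ inheriting \eqref{monotone}--\eqref{ellipticity}. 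Since $\ga_k\to 0$, $\Om_k\cap B_1\to B_1^+(0)$ in Hausdorff distance, which forces $V_\infty\equiv 0$ on $\{z_n\leq 0\}\cap B_1$, so $V_\infty$ has zero trace on $T_1$.

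I would then identify $V_\infty$ as a weak solution of the limit equation \eqref{perturbedpdeboundary} (with $\overline{\aa}_\infty$) on $B_1^+(0)$ by the Minty monotonicity trick, after upgrading weak to strong $L^p$ convergence of $\nabla v_k$ on compact subsets of $B_1^+(0)\cup T_1$ via a cutoff argument based on \eqref{monotone}. The $L^\infty$ bound then follows from Lieberman's boundary Lipschitz regularity for autonomous $p$-Laplacian-type operators with zero Dirichlet data on the flat piece $T_1$, combined with weak lower semicontinuity: $\|\nabla V_\infty\|_{L^\infty(B_{1/4}^+)}^p\leq C\fintegral_{B_1^+}|\nabla V_\infty|^p\,dx\leq C$. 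Choosing $V=V_\infty$ in the failure hypothesis then contradicts the strong convergence on $B_{1/8}$. The main obstacle I expect is precisely this strong convergence up to $T_1$: the $v_k$ vanish only on the rough portion of $\partial\Om_k$ inside the thin strip $\{|z_n|<4\ga_k\}$ and not on $T_1$ itself, so boundary-layer contributions in the Minty upgrade must be absorbed using the uniform $L^{p+\tilde{\de}_0}$ bound and a careful cutoff near $T_1$. This is precisely why Theorem \ref{higher-integrability-boundary} enters as a prerequisite rather than a convenience.
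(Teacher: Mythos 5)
First, note that this paper does not prove Theorem \ref{leiberman_analog_boundary} at all: it is imported verbatim from \cite[Theorem 2.12]{NCP}, so there is no in-paper argument to compare against. Your compactness-and-contradiction strategy is in fact the standard route for results of this type in the Byun--Wang/Phuc line of work, and most of your outline (rescaling, extension by zero, collapse of $\Om_k\cap B_1$ onto $B_1^+$, Minty/monotonicity upgrade to strong gradient convergence away from the boundary layer, Lieberman's boundary Lipschitz estimate for the autonomous limit problem) is sound in principle. Two points, however, need repair before the argument closes.

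The genuine gap is the operator mismatch at the end of the contradiction. Since $\ga$ must depend only on $(n,p,\La_0,\La_1,\ep)$, your contradiction sequence rightly lets the autonomous nonlinearities $\overline{\aa}_k$ vary, and your limit $V_\infty$ solves $\dv\,\overline{\aa}_\infty(\nabla V_\infty)=0$ in $B_1^+(0)$. But the negated statement for the $k$-th datum quantifies over weak solutions of \eqref{perturbedpdeboundary} \emph{for the $k$-th nonlinearity} $\overline{\aa}_k$; the function $V_\infty$ is not such a solution, so ``choosing $V=V_\infty$ in the failure hypothesis'' does not produce a contradiction. You must insert a transfer step: for each large $k$, let $V_k$ solve $\dv\,\overline{\aa}_k(\nabla V_k)=0$ in $B_1^+(0)$ with $V_k-V_\infty\in W_0^{1,p}(B_1^+(0))$ (so $V_k=0$ on $T_1$), test with $V_k-V_\infty$, and use \eqref{monotone} together with the locally uniform convergence $\overline{\aa}_k\to\overline{\aa}_\infty$ to get $\nabla V_k\to\nabla V_\infty$ in $L^p$; then Lieberman's estimate, applied to each $V_k$ with a constant depending only on $(n,p,\La_0,\La_1)$, gives the uniform $L^\infty$ bound, and $V_k$ is the admissible competitor that yields the contradiction. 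A secondary, smaller issue: you invoke \eqref{higher-boundary} for the $v_k$, but that reverse H\"older inequality is stated for $w$ (the solution of \eqref{wapprox}), not for $v$ (the solution of \eqref{vapprox}); you need to establish a uniform up-to-the-boundary self-improving estimate for the $v_k$ separately (it does hold, via the measure-density property of the complement of a Reifenberg flat domain and a Gehring-type argument, and it is indeed indispensable both for the weak $W^{1,p+\sigma}$ compactness and for killing the boundary-layer contribution on $B_{1/8}\cap\{-4\ga_k<z_n<0\}$), but it is not a citation of \eqref{higher-boundary} as written.
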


We now have the boundary analogue of Corollary \ref{mainlocalestimate-interior}. The proof of the following corollary follows with obvious modification as in \cite[Corollary 2.10]{MNP}. 
\begin{corollary}[\cite{MNP}] \label{mainlocalestimate-boundary} 
For any $\epsilon>0$ there exist    $\ga=\ga(n,p,\La_0,\La_1,\epsilon)>0$ and  $\tilde{\delta}_1 = \tilde{\delta}_1(n,p,\La_0,\La_1,\ep)\in (0, \tilde{\delta}_0)$,
where $\tilde{\delta}_0$ is as in Theorem \ref{higher-integrability-boundary},
such that the following holds with
 $\tau =\frac{p}{\tilde{\de}_0}\frac{(p+\tilde{\de}_0)}{ (p-1)}$.   If $\Om$ is  \gflt and if $u\in W_{0}^{1,\, \pmd}(\Om)$, $\de\in (0, \tilde{\de_1})$, is a very weak solution of 
\eqref{basicpde}  with
\begin{gather*}
\fintegral_{B_{20R}(x_0)}|\nabla u|^{\pmd}\chi_{\Om}\, dx \leq 1,  \fintegral_{B_{20R}(x_0)} |{\bf f}|^{\pmd} \chi_{\Om}\, dx \leq \ga^{\pmd},  
{\rm  ~and~} [\mathcal{A}]_\tau^{R_0} \leq \ga, 
\end{gather*}
where $x_0 \in\partial \Om$ and $R \in (0,R_0/20)$, 
then there is a function $$V\in W^{1,\, \infty}(B_{R/10}(x_0))$$ such that 
\beas
\norm{\nabla V}_{L^{\infty}(B_{R/10}(x_0))}\leq C_0 = C_0(n,p,\La_0,\La_1),
\enas
and
\bea \label{u-V}
\fintegral_{B_{R/10}(x_0)} | \nabla u- \nabla V|^{\pmd}\, dx \leq \epsilon^{\pmd} . 
\ena
\end{corollary}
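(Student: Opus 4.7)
The plan is to build $V$ through a three-step approximation chain $u \rightsquigarrow w \rightsquigarrow v \rightsquigarrow V$, exactly mirroring the strategy in the interior Corollary \ref{mainlocalestimate-interior}, but substituting the boundary versions of the three basic tools: Theorem \ref{higher-integrability-boundary} for the comparison to $w$, Lemma \ref{BMOapprox2} for the freezing of coefficients on $\Omega_\rho(0)$, and the perturbation Theorem \ref{leiberman_analog_boundary} for the passage from $v$ to the half-ball solution $V$.

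First I would let $w \in u + W_0^{1,p-\delta}(\Omega_{20R}(x_0))$ be the solution of \eqref{wapprox} and apply the comparison estimate in Theorem \ref{higher-integrability-boundary}. Under the normalization $\fint_{B_{20R}} |\nabla u|^{p-\delta} \chi_\Omega \, dx \leq 1$ and the hypothesis $\fint_{B_{20R}} |\bff|^{p-\delta} \chi_\Omega \, dx \leq \gamma^{p-\delta}$, this yields
\[
\fint_{B_{20R}(x_0)} |\nabla u - \nabla w|^{p-\delta} \, dx \apprle \delta^{(p-\delta)/(p-1)} + \gamma^{p-\delta}
\]
when $p \geq 2$ (and the obvious analogue when $1<p<2$), together with the energy bound \eqref{pmd-boundary}. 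Next, set $\rho = R(1-\gamma)$ and, via Remark \ref{domainremark}, switch to the tilted coordinate system described in \eqref{geometry_reifenberg}, in which $0 \in \Omega$ and $x_0 = (0,\dots,0,-\rho\gamma/(1-\gamma))$. Let $v \in w + W_0^{1,p}(\Omega_\rho(0))$ solve \eqref{vapprox}, extended by $w$ outside $\Omega_\rho(0)$. Lemma \ref{BMOapprox2} together with the BMO hypothesis $[\mathcal{A}]_\tau^{R_0} \leq \gamma$ gives
\[
\fint_{B_\rho(0)} |\nabla v - \nabla w|^{p-\delta} \, dx \apprle \gamma^{c_p} \fint_{B_{14\rho}(0)} |\nabla w|^{p-\delta} \, dx,
\]
with $c_p = \min\{p-\delta,(p-\delta)/(p-1)\}$, and the last integral is controlled by a universal constant via \eqref{pmd-boundary} and the normalization of $u$, once we verify that $B_{14\rho}(0) \subset B_{20R}(x_0)$ (true because $14\rho + \rho\gamma/(1-\gamma) < 20R$ for $\gamma<1/2$).

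The third step invokes Theorem \ref{leiberman_analog_boundary}: for any prescribed $\varepsilon_0>0$ there is $\gamma$ small such that a weak solution $V$ of \eqref{perturbedpdeboundary}, zero-extended to $B_\rho(0)$, satisfies
\[
\|\nabla V\|_{L^\infty(B_{\rho/4}(0))}^p \apprle \fint_{B_\rho(0)} |\nabla v|^p \, dx \quad\text{and}\quad \fint_{B_{\rho/8}(0)} |\nabla v - \nabla V|^p \, dx \leq \varepsilon_0^p \fint_{B_\rho(0)} |\nabla v|^p \, dx.
\]
To convert the right-hand $L^p$ integral into a universal constant, use the energy inequality $\fint_{B_\rho} |\nabla v|^p \, dx \apprle \fint_{B_\rho}|\nabla w|^p \, dx$ and then upgrade the $L^{p-\tilde\delta_0}$ bound on $\nabla w$ to $L^p$ via the Gehring-type higher integrability \eqref{higher-boundary} on a moderate enlargement of $B_\rho$, together with \eqref{pmd-boundary} and the normalization. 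This delivers $\|\nabla V\|_{L^\infty(B_{\rho/4}(0))} \leq C_0$. Choosing $\gamma$ sufficiently small so that $B_{R/10}(x_0) \subset B_{\rho/8}(0) \subset B_{\rho/4}(0)$ — which is possible since $R/10 + \rho\gamma/(1-\gamma) < \rho/8$ when $\gamma$ is small — and combining the three comparison estimates by the triangle inequality in $L^{p-\delta}(B_{R/10}(x_0))$, with Hölder's inequality used to pass from the $L^p$ comparison to an $L^{p-\delta}$ comparison in the third step, produces the desired bound.

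The main obstacle is the bookkeeping of constants and scales: one must first fix $\varepsilon_0$ small in Theorem \ref{leiberman_analog_boundary} to absorb the $V$-step, which in turn forces a small choice of $\gamma$; this same $\gamma$ must then be small enough for the BMO comparison in Lemma \ref{BMOapprox2} and for the $\bff$-smallness in Theorem \ref{higher-integrability-boundary}; and only at the very end is $\delta$ (hence $\tilde\delta_1$) chosen small so that the $\delta^{(p-\delta)/(p-1)}$ and $\delta^{p-\delta}$ terms from Theorem \ref{higher-integrability-boundary} drop below $\varepsilon^{p-\delta}$. The subtle point in this chain is that Lemma \ref{BMOapprox2} requires control of $\nabla w$ on the enlarged ball $B_{14\rho}(0)$, while the higher integrability \eqref{higher-boundary} inflates balls by a factor $7$, so one has to check that every enlargement used in the chain remains strictly inside $B_{20R}(x_0)$ — the factors $20$ and $14$ in the statements of Theorem \ref{higher-integrability-boundary} and Lemma \ref{BMOapprox2} are precisely what make this work.
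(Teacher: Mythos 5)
Your proposal is correct and follows essentially the same route as the paper: the same three-step chain $u\rightsquigarrow w\rightsquigarrow v\rightsquigarrow V$ via Theorem \ref{higher-integrability-boundary}, Lemma \ref{BMOapprox2}, and Theorem \ref{leiberman_analog_boundary}, with the same coordinate change from Remark \ref{domainremark}, the same inclusion checks (the paper makes your ``$\gamma$ small'' explicit as $\gamma<1/45$ for $B_{R/10}(x_0)\subset B_{\rho/8}(0)$), and the same order of parameter selection ($\eta$, then $\gamma$, then $\tilde\delta_1$).
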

\begin{proof}
 With  $x_0\in\partial\Om$ and  $R\in (0, R_0/20)$,  we  set
$\rho= R(1-\gamma)$. Also, extend both $u$ and ${\bf f}$ by zero to $\RR^n\setminus\Om$.
By Remark \eqref{domainremark}  and by  translating and  rotating if necessary, we may assume that $0\in\Om$,
$x_0=(0, \dots, 0, -\rho \gamma /(1-\gamma))$  and the geometric setting
\begin{equation}\label{BrhoOmrhosecond}
 B_{\rho}^{+}(0)\subset \Omega_{\rho}(0)\subset B_{\rho}(0)\cap \{ x_{n} > -4 \gamma\rho \}.
\end{equation}

Moreover, we shall further restrict  $\ga\in (0,1/45)$ so that we have $$B_{R/10}(x_0)\subset B_{\rho/8}(0).$$

We now choose $w$ and $v$ as in \eqref{wapprox} and \eqref{vapprox} corresponding to these $R$ and $\rho$.
Then, since $B_{14\rho}(0)\subset B_{20R}(x_0)$, there holds
\begin{equation*}
\fint_{B_{\rho}(0)} |\nabla v|^p dx \leq C  \fint_{B_{14\rho}(0)} |\nabla w|^p dx\leq C  \fint_{B_{20R}(x_0)} |\nabla u|^p dx \leq C.
\end{equation*}

By Theorem \ref{leiberman_analog_boundary} for any $\eta>0$ we can find a  $\ga=\ga(n, p, \Lambda_0, \Lambda_1, \eta)\in(0, 1/45)$ such that,
under \eqref{BrhoOmrhosecond}, there is a
function  $V\in W^{1,\, p}(B_\rho(0))\cap W^{1,\, \infty}(B_{\rho/4}(0))$ such that
$$
\norm{\nabla V}^{p}_{L^{\infty}(B_{R/10}(x_0))} \leq C\norm{\nabla V}^{p}_{L^{\infty}(B_{\rho/4}(0))} \leq C \fint_{B_{\rho}(0)} |\nabla v|^p dx \leq  C,
$$
and
\begin{equation*}
 \fint_{B_{\rho/8}(0)} | \nabla v- \nabla V|^pdx \leq \eta^p \fint_{B_{\rho}(0)} |\nabla v|^p dx \leq  C  \eta^p.
\end{equation*}

By H\"older's inequality, the last bound gives 
\begin{equation}\label{v-V}
\fint_{B_{\rho/8}(0)} | \nabla v- \nabla V|^{\pmd} dx \leq  C  \eta^{p-\delta}.
\end{equation}

Now writing 
\begin{equation*}
\fint_{B_{R/10}(x_0)} | \nabla u- \nabla V|^{p-\de}dx= \fint_{B_{\rho/8}(0)} | \nabla (u-w) +\nabla (w-v)+\nabla (v-V)|^{p-\de}dx,
\end{equation*}
and using \eqref{v-V} along with  Theorem \ref{higher-integrability-boundary} and Lemma  \ref{BMOapprox2},  
we obtain inequality \eqref{u-V} as desired (after choosing $\tilde{\delta}_1=\tilde{\delta}_1(\epsilon)$, $\eta=\eta(\epsilon)$, and  $\ga=\ga(\epsilon)$
 appropriately for any given $\epsilon>0$).
\end{proof}


\section{Weighted estimates}\label{sec7}

We now use Corollaries \ref{mainlocalestimate-interior} and \ref{mainlocalestimate-boundary}  to obtain the following technical result. 
\begin{proposition}\label{Byun-Wang-bdry}
Under \eqref{monotone}-\eqref{ellipticity}, there are  $\la = \la(n,p,\La_0,\La_1) > 1$ and $\tau = \tau(n,p,\La_0,\La_1)>1$ such that the following holds.
For any $\ep>0$, there exist  $\ga = \ga(n,p,\La_0,\La_1,\ep)>0$  and $\overline{\de} = \overline{\de}(n,p,\La_0,\La_1,\ep)>0$
such that if $u \in W_0^{1,\pmd}(\Om)$,  $\de\in(0,\overline{\de})$,  is a very weak solution to \eqref{basicpde} with $\Om$ being 
$(\ga, R_0)$-Reifenberg flat,  $[\mathcal{A}]_{\tau}^{R_0}\leq \ga,$
and if, for some ball $B_\rho(y)$ with $\rho < R_0/1200$,
\bea\label{BWassum2}
B_{\rho}(y) \cap \{x\in \RR^n&: \mm(|\nabla u|^{\pmd})^{\frac{1}{\pmd}}(x) \leq 1\}\cap \\
&\cap \{ x\in \RR^n: \mm(|{\bff}|^{\pmd}\chi_{\Om})^{\frac{1}{\pmd}} (x) \leq 
\ga \}\neq \emptyset,
\ena
then one has
\bea
\label{BWmain}
|\{ x\in \RR^n: \mm(|\nabla u|^{\pmd})^{\frac{1}{\pmd}} (x) > \lambda \} \cap B_{\rho}(y)|< \epsilon	 \, |B_{\rho}(y)|.
\ena
\end{proposition}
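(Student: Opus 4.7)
The plan is the standard Calder\'on--Zygmund density argument (cf.\ \cite{M-P2,MNP}): produce a Lipschitz replacement $V$ on a ball containing $B_\rho(y)$ via Corollary \ref{mainlocalestimate-interior} or Corollary \ref{mainlocalestimate-boundary}, then bound the super-level set by the weak-$(1,1)$ inequality for the Hardy--Littlewood maximal function. Let $x_1 \in B_\rho(y)$ be the point furnished by \eqref{BWassum2}, so that $\mathcal{M}(|\nabla u|^{p-\delta})(x_1) \leq 1$ and $\mathcal{M}(|\mathbf{f}|^{p-\delta}\chi_\Omega)(x_1) \leq \gamma^{p-\delta}$. The key geometric observation is that for any $z \in B_\rho(y)$ and $r \geq \rho$, one has $B_r(z) \subset B_{r+2\rho}(x_1) \subset B_{3r}(x_1)$, hence
\beas
\fint_{B_r(z)} |\nabla u|^{p-\delta}\, dx \leq 3^n, \qquad \fint_{B_r(z)} |\mathbf{f}|^{p-\delta}\chi_\Omega\, dx \leq 3^n \gamma^{p-\delta}.
\enas

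Split into cases according to whether $\mathrm{dist}(y,\partial\Omega) \geq 5\rho$ (interior) or $< 5\rho$ (boundary). In the interior case take $R = 2\rho$; then $\overline{B_{2R}(y)} \subset \Omega$, and the averages above bound $\fint_{B_{2R}(y)} |\nabla u|^{p-\delta}\, dx$ by $3^n$ and $\fint_{B_{2R}(y)} |\mathbf{f}|^{p-\delta}\, dx$ by $3^n \gamma^{p-\delta}$. After rescaling $(u,\mathbf{f}) \mapsto (u/K,\mathbf{f}/K)$ with $K = 3^{n/(p-\delta)}$ (which replaces $\mathcal{A}$ by $\tilde{\mathcal{A}}(x,\xi) = K^{1-p}\mathcal{A}(x,K\xi)$, obeying the same structural bounds \eqref{monotone}--\eqref{ellipticity} and the identical $(\gamma,R_0)$-BMO estimate with the same exponent $\tau$), Corollary \ref{mainlocalestimate-interior} yields $V$ on $B_{R/2}(y) = B_\rho(y)$ with $\|\nabla V\|_{L^\infty(B_\rho(y))} \leq KC_0$ and $\fint_{B_R(y)} |\nabla u - \nabla V|^{p-\delta}\,dx \leq (K\eta)^{p-\delta}$, where $\eta = \eta(\epsilon)$ is an approximation parameter to be chosen. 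In the boundary case pick $x_0 \in \partial\Omega$ with $|x_0 - y| < 5\rho$ and set $R = 60\rho$; the hypothesis $\rho < R_0/1200$ forces $R < R_0/20$, while $B_\rho(y) \subset B_{R/10}(x_0) = B_{6\rho}(x_0)$ and $B_{20R}(x_0) \subset B_{1205\rho}(x_1)$, so the analogous averaged bounds hold. After the same rescaling, Corollary \ref{mainlocalestimate-boundary} produces $V \in W^{1,\infty}(B_{R/10}(x_0))$ with $\|\nabla V\|_{L^\infty} \leq KC_0$ and $\fint_{B_{R/10}(x_0)} |\nabla u - \nabla V|^{p-\delta}\,dx \leq (K\eta)^{p-\delta}$.

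Let $\mathcal{B}$ denote the local ball ($B_\rho(y)$ or $B_{R/10}(x_0)$) on which $V$ is defined, so $|\mathcal{B}| \leq C(n)|B_\rho(y)|$. Using $|\nabla u|^{p-\delta} \leq 2^{p-\delta}(|\nabla u - \nabla V|^{p-\delta} + |\nabla V|^{p-\delta})$ on $\mathcal{B}$ together with the pointwise bound $|\nabla V| \leq KC_0$, and combining with the tail bound (balls $B_r(x)$ for $x \in B_\rho(y)$ with $r$ large enough to leave $\mathcal{B}$ satisfy $\fint_{B_r(x)}|\nabla u|^{p-\delta}\,dx \leq 3^n$ by the first paragraph), we obtain for every $x \in B_\rho(y)$
\beas
\mathcal{M}(|\nabla u|^{p-\delta})(x) \leq \max\bigl\{3^n,\ 2^{p-\delta}(KC_0)^{p-\delta} + 2^{p-\delta}\mathcal{M}(|\nabla u - \nabla V|^{p-\delta}\chi_{\mathcal{B}})(x)\bigr\}.
\enas
Fix $\lambda > 1$ with $\lambda^{p-\delta} = 3\max\bigl\{3^n,\ 2^{p-\delta+1}(KC_0)^{p-\delta}\bigr\}$, so that $\lambda$ depends only on $n,p,\Lambda_0,\Lambda_1$. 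Then on the super-level set $\{\mathcal{M}(|\nabla u|^{p-\delta})^{1/(p-\delta)} > \lambda\} \cap B_\rho(y)$ we must have $\mathcal{M}(|\nabla u - \nabla V|^{p-\delta}\chi_{\mathcal{B}})(x) \geq c\lambda^{p-\delta}$, and the weak-$(1,1)$ inequality gives
\beas
\bigl|\{x \in B_\rho(y) : \mathcal{M}(|\nabla u|^{p-\delta})(x) > \lambda^{p-\delta}\}\bigr| \leq \frac{C}{\lambda^{p-\delta}}\int_{\mathcal{B}} |\nabla u - \nabla V|^{p-\delta}\,dx \leq C(n,p)\eta^{p-\delta}|B_\rho(y)|.
\enas
Selecting $\eta$ so that $C(n,p)\eta^{p-\delta} \leq \epsilon$ then pins down $\gamma = \gamma(\epsilon)$ and $\overline{\delta} = \overline{\delta}(\epsilon)$ through the two corollaries.

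The main technical burden is geometric bookkeeping: in the boundary case one must simultaneously arrange that $R < R_0/20$, that $B_\rho(y) \subset B_{R/10}(x_0)$, and that $B_{20R}(x_0)$ remains contained in a ball around $x_1$ of comparable radius; the threshold $\rho < R_0/1200$ in the hypothesis is precisely what makes this possible. A secondary check is that the $L^\infty$ bound and the $(\gamma,R_0)$-BMO norm survive the normalization $(u,\mathbf{f}) \mapsto (u/K,\mathbf{f}/K)$, which they do because \eqref{monotone}--\eqref{ellipticity} and Definition \ref{BMO-condition} are invariant under this rescaling.
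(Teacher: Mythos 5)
Your proposal follows the same route as the paper: locate the good point $x_1$ from \eqref{BWassum2}, split into an interior and a boundary case, invoke Corollaries \ref{mainlocalestimate-interior} and \ref{mainlocalestimate-boundary} (with $R=60\rho$ near the boundary, which is exactly why $\rho<R_0/1200$ is assumed), and finish with the weak-$(1,1)$ bound for $\mathcal{M}$. The explicit normalization by $K=3^{n/(p-\delta)}$ is a slightly cleaner way of handling the constant $(1205/1200)^n$ that the paper simply absorbs, and is fine (though you should note that $\lambda$, which must be fixed before $\epsilon$, formally depends on $\delta$ through $K$ and the exponent $p-\delta$; replace $K$ and the exponent by uniform bounds over $\delta\in(0,\min\{1,p-1\}/2)$).

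There is, however, one step that fails as written: the displayed pointwise bound
$\mathcal{M}(|\nabla u|^{p-\delta})(x) \leq \max\{3^n,\ 2^{p-\delta}(KC_0)^{p-\delta}+2^{p-\delta}\mathcal{M}(|\nabla u-\nabla V|^{p-\delta}\chi_{\mathcal{B}})(x)\}$.
For $x\in B_\rho(y)$ and intermediate radii $r$ — say $x$ near $\partial B_\rho(y)$ and $\rho/2<r<\rho$ in your interior case where $\mathcal{B}=B_\rho(y)$ — the ball $B_r(x)$ is neither contained in $\mathcal{B}$ (so the second branch misses the contribution of $|\nabla u|^{p-\delta}\chi_{\mathcal{B}^c}$, over which you have no control) nor of radius $\geq\rho$ (so the tail bound $3^n$ does not apply); the average $\fint_{B_r(x)}|\nabla u|^{p-\delta}\chi_{\mathcal{B}^c}\,dx$ can only be compared to $\fint_{B_{r+2\rho}(x_1)}$ at the cost of a factor $((r+2\rho)/r)^n$, which blows up as $r\to 0$. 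The paper's fix is the inequality \eqref{BW3}: for $x\in B_\rho(y)$, $\mathcal{M}(|\nabla u|^{p-\delta})(x)\leq\max\{\mathcal{M}(\chi_{B_{2\rho}(y)}|\nabla u|^{p-\delta})(x),\,3^n\}$, valid because $r\leq\rho$ implies $B_r(x)\subset B_{2\rho}(y)$; this forces you to control $\nabla V$ and the comparison on the \emph{larger} ball $B_{2\rho}(y)$. Your parameters do not quite give this: in the boundary case $|x_0-y|<5\rho$ only yields $B_\rho(y)\subset B_{6\rho}(x_0)$, not $B_{2\rho}(y)\subset B_{6\rho}(x_0)$ (the paper takes $y_0\in B_{4\rho}(y)\cap\partial\Omega$, whence $B_{2\rho}(y)\subset B_{6\rho}(y_0)=B_{R/10}(y_0)$), and in the interior case $R=2\rho$ gives $L^\infty$ control only on $B_{R/2}(y)=B_\rho(y)$, so you should take $R\geq 4\rho$ and adjust the distance threshold in your case split accordingly. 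These are bookkeeping repairs, not a change of method, but as stated the key pointwise inequality is false.
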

\begin{proof} 
By \eqref{BWassum2}, there exists an $x_0 \in B_{\rho}(y)$ such that for any $r > 0$, 
\bea
\label{BWassum3}
\fintegral_{B_{r}(x_{0})} |\nabla u|^{\pmd}\, dx \leq 1\quad \text{and}~~\fintegral_{B_{r}(x_{0})} \chi_{\Omega}|{\bff}|^{\pmd}\, dx \leq \ga^{\pmd}.
\ena

By the first inequality in \eqref{BWassum3},  for any $x \in B_{\rho}(y)$, there holds
\bea
\label{BW3}
\mm({|\nabla u|}^{\pmd})^{\frac{1}{\pmd}} (x) \leq \max \left\{ \mm(\chi_{B_{2\rho}(y)} {|\nabla u|}^{\pmd})^{\frac{1}{\pmd}} (x), \, 3^n \right\}.
\ena

To prove \eqref{BWmain}, it is enough to consider the case $B_{4\rho}(y) \subset \Om$ and the case $B_{4\rho}(y) \cap \partial \Om \neq \emptyset$. First we consider the latter.  Let
$y_0 \in B_{4\rho}(y) \cap \partial \Om$, we then have
\beas
B_{2\rho}(y) \subset B_{6\rho}(y_0) \subset B_{1200\rho}(y_0) \subset B_{1205 \rho}(x_0).
\enas

Thus by \eqref{BWassum3} we obtain
\beas
\fintegral_{B_{1200\rho}(y_{0})} |\nabla u|^{\pmd}\, dx \leq c\quad \text{and}~~\fintegral_{B_{1200\rho}(y_{0})} \chi_{\Omega}|{\bff}|^{\pmd}\, dx  \leq c\, \ga^{\pmd},
\enas
where $c=(1205/1200)^n$. Since $60\rho < R_0/20$, by Corollary \ref{mainlocalestimate-boundary} (with $R=60\rho$), there exists a $\tau(n,p,\Lambda_0,\Lambda_1) >1 $ such that the following holds. For any $\eta \in (0,1)$, there are $\ga = \ga(n,p,\La_0,\La_1,\eta)>0$, $\overline{\de} = \overline{\de}(n,p,\La_0,\La_1,\eta)>0$
such that if $\Om$ is a \gflt domain and $[\aa]_{\tau}^{R_0} \leq \ga$, then one can find a function $V \in W^{1,\infty} (B_{6\rho}(y_0))$ with
\bea
\label{BWassum4}
\| \nabla V\|_{L^{\infty} (B_{2\rho}(y))} \leq \| \nabla V\|_{L^{\infty} (B_{6\rho}(y_0))} \leq C_0,
\ena
and, for $\de\in (0,\overline{\de})$,
\bea
\label{BWassum5}
\fintegral_{B_{2\rho}(y)} |\nabla u - \nabla V|^{\pmd} \, dx \leq C \fintegral_{B_{6\rho}(y_0)} |\nabla u - \nabla V|^{\pmd}\, dx \leq C\, \eta^{\pmd}.
\ena

In view of \eqref{BW3}, \eqref{BWassum4} and the triangle inequality we see that, for $\la = \max \{ 3^n, 2C_0\}$, 
\beas
\{ x \in \RR^n & : \mm(|\nabla u|^{\pmd})^{\frac{1}{\pmd}} (x) > \la\} \cap B_{\rho} (y) \subset \\
&\subset \{ x \in \RR^n : \mm(\chi_{B_{2\rho}(y)}|\nabla u|^{\pmd})^{\frac{1}{\pmd}} (x) > \la\} \cap B_{\rho} (y) \\
&\subset \{ x \in \RR^n : \mm(\chi_{B_{2\rho}(y)}|\nabla u -\nabla V|^{\pmd})^{\frac{1}{\pmd}} (x) > \la/2\} \cap B_{\rho} (y).
\enas

Thus by the weak-type $(1,1)$ inequality for the Hardy-Littlewood maximal function and \eqref{BWassum5}, we find
\beas
|\{ x \in \RR^n & : \mm(|\nabla u|^{\pmd})^{\frac{1}{\pmd}} (x) > \la\} \cap B_{\rho} (y) | \leq \\
&\leq \frac{C}{\la^{\pmd}} \integral_{B_{2\rho}(y)} |\nabla u - \nabla V|^{\pmd} \, dx
\leq \frac{C}{C_0^{\pmd}} |B_{2\rho}(y)|\, \eta^{\pmd}.
\enas

This gives the estimate \eqref{BWmain} in the case $B_{4\rho}(y) \cap \partial \Om \neq \emptyset$, provided $\eta$ is appropriately chosen. The interior case $B_{4\rho}(y) \subset \Om$ can be obtained in a similar was by using Corollary \ref{mainlocalestimate-interior}, instead of Corollary \ref{mainlocalestimate-boundary}. 
\end{proof}

Proposition \ref{Byun-Wang-bdry} can now be used to obtain the following result which involves $\aw_{\infty}$ weights.

\begin{proposition}\label{contra}
Under \eqref{monotone}-\eqref{ellipticity}, there exist  $\la = \la(n,p,\La_0,\La_1)>1$ and $\tau = \tau(n,p,\La_0,\La_1)>1$ such that the following holds.
For any weight $w\in A_\infty$ and any $\epsilon>0$, there exist 
 $\ga = \ga(n,p,\La_0,\La_1,\ep,[w]_{\infty}) > 0$ and $\overline{\de}=\overline{\de}(n,p,\La_0,\La_1,\ep,[w]_{\infty}) > 0$ such that if $u \in W_0^{1,\pmd} (\Om)$, $\de\in (0,\overline{\de})$, is a very weak solution of \eqref{basicpde} with $\Om$ being $(\ga, R_0)$-Reifenberg flat, $[\aa]_{\tau}^{R_0} \leq \ga$, and if, for some ball $B_{\rho}(y)$ with $\rho < R_0/1200$, 
\beas
w(\{ x\in \RR^n: \mm(|\nabla u|^{\pmd})^{\frac{1}{\pmd}} (x) > \lambda \} \cap B_{\rho}(y))\geq \epsilon \, w(B_{\rho}(y)),
\enas
 then one has
\bea
\label{inclusion-1}
B_{\rho}(y) \subset \{x\in \RR^n&: \mm(|\nabla u|^{\pmd})^{\frac{1}{\pmd}} (x) > 1\} \, \cup  \\ 
& \cup\,  \{ x\in \RR^n: \mm(|{\bff}|^{\pmd}\chi_{\Om})^{\frac{1}{\pmd}}(x)> \ga\}.
\ena
\end{proposition}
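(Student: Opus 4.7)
The plan is to prove Proposition \ref{contra} by contrapositive, feeding the Lebesgue measure decay from Proposition \ref{Byun-Wang-bdry} into the $A_\infty$ reverse-doubling characterization of Lemma \ref{inversedoubling}.

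First I would take $\tau$ and $\lambda$ to be exactly the constants produced by Proposition \ref{Byun-Wang-bdry}. Suppose, for the sake of contradiction, that the inclusion \eqref{inclusion-1} fails for some ball $B_\rho(y)$ with $\rho < R_0/1200$. Then there exists a point
\[
x_0 \in B_\rho(y) \cap \{\mm(|\nabla u|^{\pmd})^{1/(\pmd)} \leq 1\} \cap \{\mm(|\bff|^{\pmd}\chi_\Om)^{1/(\pmd)} \leq \ga\},
\]
which is precisely the non-emptiness hypothesis \eqref{BWassum2} of Proposition \ref{Byun-Wang-bdry}. Applying that proposition with a parameter $\ep' > 0$ (to be chosen momentarily in terms of $\ep$ and the $A_\infty$ constants), and selecting the corresponding $\ga = \ga(n,p,\La_0,\La_1,\ep')$ and $\overline{\de} = \overline{\de}(n,p,\La_0,\La_1,\ep')$, one obtains the Lebesgue-measure bound
\[
\bigl|E\bigr| < \ep' \,|B_\rho(y)|, \qquad E := \{\mm(|\nabla u|^{\pmd})^{1/(\pmd)} > \la\} \cap B_\rho(y).
\]

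Next I would convert this to a $w$-measure bound. Since $w \in A_\infty$, Lemma \ref{inversedoubling} supplies constants $\Xi_0, \Xi_1 > 0$ depending only on $n$ and $[w]_\infty$ such that
\[
w(E) \leq \Xi_0 \Bigl(\frac{|E|}{|B_\rho(y)|}\Bigr)^{\Xi_1} w(B_\rho(y)) \leq \Xi_0 \,(\ep')^{\Xi_1}\, w(B_\rho(y)).
\]
Now I would choose $\ep' = \ep'(n,p,\La_0,\La_1,\ep,[w]_\infty)$ small enough that $\Xi_0\,(\ep')^{\Xi_1} < \ep$. This yields $w(E) < \ep\, w(B_\rho(y))$, directly contradicting the standing hypothesis on $w(E)$ in the statement of the proposition. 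Tracing back the chain of dependencies, the final $\ga$ and $\overline{\de}$ depend only on $n,p,\La_0,\La_1,\ep$ and $[w]_\infty$, as required.

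There is no serious technical obstacle here; the proof is essentially a bookkeeping exercise. The one point that requires some care is to note that the constants $\Xi_0,\Xi_1$ depend on $w$ only through $[w]_\infty$ (an upper bound for some $[w]_s$), so that the quantitative dependence claimed in the proposition is honest. After the choice of $\ep'$ is made, the constants $\ga$ and $\overline{\de}$ inherited from Proposition \ref{Byun-Wang-bdry} automatically have the dependence advertised, and the proof is complete.
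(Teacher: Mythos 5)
Your proposal is correct and follows essentially the same route as the paper: argue by contradiction, observe that failure of the inclusion \eqref{inclusion-1} yields the non-emptiness hypothesis of Proposition \ref{Byun-Wang-bdry} applied with a reduced parameter $\ep'$, and then upgrade the resulting Lebesgue-measure decay to a $w$-measure decay via Lemma \ref{inversedoubling}. The only cosmetic difference is that the paper fixes $\ep'=[\ep/(2\Xi_0)]^{1/\Xi_1}$ explicitly rather than leaving the choice implicit.
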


\begin{proof}

Suppose that  $(\Xi_0,\Xi_1)$  is a pair of $\aw_{\infty}$ constants of $w$ and let $\la $ and $\tau$ be as in Proposition \ref{Byun-Wang-bdry}. Given $\ep > 0$, we choose a $\ga = \ga(\Xi_0,\Xi_1,\ep)$ and $\overline{\de}=\overline{\de}(\Xi_0,\Xi_1,\ep)$ as in Proposition \ref{Byun-Wang-bdry} with $[\ep/(2\Xi_0)]^{1/\Xi_1}$ replacing $\ep$. 
The proof then follows by a contradiction. To that end, suppose that the inclusion in \eqref{inclusion-1} fails for this $\ga$, then we must have that
\beas
B_{\rho}(y) \cap \{x\in \RR^n&: \mm(|\nabla u|^{\pmd})^{\frac{1}{\pmd}} (x) \leq 1\} \, \cap  \\ 
& \cap\,  \{ x\in \RR^n: \mm(|{\bff}|^{\pmd}\chi_{\Om})^{\frac{1}{\pmd}}(x)\leq \ga\} \neq \emptyset
\enas
for some $\delta\in (0, \overline{\de})$.
Hence by Proposition \ref{Byun-Wang-bdry}, if $\Om$ is a \gflt and $[\aa]_{\tau}^{R_0} \leq \ga$, there holds
\beas
|\{ x \in \RR^n: \, \mm(|\nabla u|^{\pmd})^{\frac{1}{\pmd}}(x) > \la \cap B_{\rho}(y)| \leq \left(\frac{\ep}{2\, \Xi_0}\right)^{1/\Xi_1} |B_{\rho}(y)|.
\enas

Thus using the $\aw_{\infty}$ characterization of $w$ (Lemma \ref{inversedoubling}), we immediately get  that
\beas
w(\{ x& \in \RR^n: \mm(|\nabla u|^{\pmd})^{\frac{1}{\pmd}} (x)  > \lambda \} \cap B_{\rho}(y)) \\
&\leq  \Xi_0 \left[\frac{ |\{ x \in \RR^n: \mm(|\nabla u|^{\pmd})^{\frac{1}{\pmd}}(x) > \la \} \cap B_{\rho}(y) |}{|B_{\rho}(y)|} \right]^{\Xi_1} w(B_{\rho}(y)) \\
&\leq \frac{\ep}{2} w(B_{\rho}(y)) < \ep\, w(B_{\rho}(y)).
\enas

This yields a contradiction and thus the proof is complete. 
\end{proof}

The following Calder\'on-Zygmund decomposition type lemma will allow us to iterate the result of  Proposition  \ref{contra} to 
obtain Theorem \ref{technicallemma} below. 
In the unweighted case various versions of this lemma have been obtained (see, e.g., \cite{CP, W, BW2}).
 The proof of this  weighted version was presented  in \cite{M-P1}. 

\begin{lemma} \label{vitalibdry}
 Let $\Omega$ be a \gflt  domain with $\ga<1/8$, and let $w$ be an $A_{\infty}$
 weight.
 Suppose that the sequence of balls $\{ B_{r}(y_i)\}_{i=1}^{L}$ with centers $y_i\in \overline{\Om}$
 and a common radius $r\leq R_0/4$  covers $\Om$.  Let  $C\subset D\subset \Omega$
 be measurable sets for which there exists $0<\epsilon < 1 $ such that
\begin{enumerate}
\item $ w(C)< \epsilon\, w(B_{r}(y_i)) $ for all $i=1,\dots, L$, and
\item for all $x \in \Omega$ and $\rho \in (0, 2r]$, if  $w(C\cap B_{\rho}(x))\geq \epsilon\,
w(B_{\rho}(x))$, then $B_{\rho}(x)\cap \Omega \subset D.$
\end{enumerate}
Then we have the estimate
 \begin{eqnarray*} w(C) \leq B \,\epsilon\, w(D)\end{eqnarray*}
for a constant $B$ depending only on $n$ and the $A_\infty$ constants of $w$. 
\end{lemma}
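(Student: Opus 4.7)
The plan is a weighted Calder\'on--Zygmund stopping-time argument on $C$, combined with a Vitali covering, hypothesis (2) to transfer density from $C$ to $D$, and Reifenberg flatness plus the $A_\infty$ property of $w$ to convert a Lebesgue-density lower bound into a $w$-density lower bound.

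For $w$-a.e.\ $x \in C$, the Lebesgue differentiation theorem (valid since $A_\infty$ weights are doubling) gives $w(C \cap B_\rho(x))/w(B_\rho(x)) \to 1$ as $\rho \to 0^+$. On the other hand, since $\{B_r(y_i)\}$ covers $\Om$, there is some $i$ with $x \in B_r(y_i) \subset B_{2r}(x)$, so hypothesis (1) yields
\[
w(C \cap B_{2r}(x)) \leq w(C) < \epsilon\, w(B_r(y_i)) \leq \epsilon\, w(B_{2r}(x)).
\]
Hence the stopping radius
\[
\rho_x := \sup\{\rho \in (0, 2r] \,:\, w(C \cap B_\rho(x)) > \epsilon\, w(B_\rho(x))\}
\]
satisfies $0 < \rho_x < 2r$, and continuity of measure on increasing open balls gives $w(C \cap B_{\rho_x}(x)) \geq \epsilon\, w(B_{\rho_x}(x))$. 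I then apply Vitali's $5$-times covering lemma to $\{B_{\rho_x}(x)\}_{x \in C}$ to extract a countable disjoint subfamily $\{B_{\rho_j}(x_j)\}$ whose five-fold dilates cover $C$ up to a $w$-null set.

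Since $\rho_j \leq 2r$, hypothesis (2) gives $B_{\rho_j}(x_j) \cap \Om \subset D$. Reifenberg flatness with $\ga < 1/8$ and $\rho_j \leq R_0/2$ provides the lower Lebesgue-density bound $|B_{\rho_j}(x_j) \cap \Om| \geq c_R(n)\,|B_{\rho_j}(x_j)|$ for $x_j \in \overline{\Om}$. Applying Lemma \ref{inversedoubling} to the complement $B_{\rho_j}(x_j) \setminus \Om$ (equivalently, the ``lower'' formulation of the $A_\infty$ condition) then produces $\eta = \eta(n,[w]_\infty) > 0$ such that
\[
w(D \cap B_{\rho_j}(x_j)) \geq w(\Om \cap B_{\rho_j}(x_j)) \geq \eta\, w(B_{\rho_j}(x_j)),
\]
and summing over the disjoint balls yields $w(D) \geq \eta \sum_j w(B_{\rho_j}(x_j))$.

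To bound $w(C) \leq \sum_j w(C \cap B_{5\rho_j}(x_j))$ from above, I split into cases. If $5\rho_j \leq 2r$, the defining supremum property of $\rho_j$ together with the doubling of $w$ gives $w(C \cap B_{5\rho_j}(x_j)) \leq \epsilon\, w(B_{5\rho_j}(x_j)) \leq c_d\, \epsilon\, w(B_{\rho_j}(x_j))$. If $5\rho_j > 2r$, then $\rho_j > 2r/5$; choosing $y_i$ with $x_j \in B_r(y_i)$, hypothesis (1) together with the doubling estimate $w(B_r(y_i)) \leq C_0\, w(B_{\rho_j}(x_j))$ (justified because $r$ and $\rho_j$ are comparable and $B_r(y_i) \subset B_{2r}(x_j)$) again yields $w(C \cap B_{5\rho_j}(x_j)) \leq C_0\, \epsilon\, w(B_{\rho_j}(x_j))$. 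Summing and combining with the previous step gives $w(C) \leq (C_0/\eta)\,\epsilon\, w(D)$, which is the claimed estimate. The main obstacle is the passage from the Lebesgue-density bound supplied by Reifenberg flatness to a weighted-density bound via the $A_\infty$ characterization of Lemma \ref{inversedoubling}; handling the Vitali overshoot $5\rho_j > 2r$ by invoking hypothesis (1) directly is a minor bookkeeping point.
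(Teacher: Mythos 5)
Your argument is essentially the standard proof of this weighted Calder\'on--Zygmund/Vitali covering lemma; the paper itself does not prove it but defers to \cite{M-P1}, and your stopping-time construction, the Vitali $5r$-selection, the use of hypothesis (2) on the selected balls, and the two-case treatment of the dilated balls (including the overshoot $5\rho_j>2r$ handled via hypothesis (1) and doubling) all match that argument and are correct.

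One justification, however, is not right as stated. To pass from the Lebesgue density bound $|B_{\rho_j}(x_j)\cap\Om|\geq c_R(n)|B_{\rho_j}(x_j)|$ to the weighted bound $w(\Om\cap B_{\rho_j}(x_j))\geq \eta\, w(B_{\rho_j}(x_j))$, you propose ``applying Lemma \ref{inversedoubling} to the complement.'' That only yields $w(B\setminus\Om)\leq \Xi_0(1-c_R)^{\Xi_1}w(B)$, which gives a positive lower bound for $w(B\cap\Om)$ only when $\Xi_0(1-c_R)^{\Xi_1}<1$; since $\Xi_0$ may be large and $c_R(n)$ is a fixed, possibly small, dimensional constant, this can fail. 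The fact you need is the genuinely two-sided comparability of $w\,dx$ and Lebesgue measure for $A_\infty$ weights: if $w\in A_s$ then H\"older's inequality applied to $|E|=\int_E w^{1/s}w^{-1/s}\,dx$ gives
\begin{equation*}
\left(\frac{|E|}{|B|}\right)^{s}\leq [w]_s\,\frac{w(E)}{w(B)},\qquad E\subset B,
\end{equation*}
which with $E=\Om\cap B_{\rho_j}(x_j)$ produces $\eta=\eta(n,s,[w]_s,c_R)>0$ directly. This is an equivalent formulation of the $A_\infty$ property (it is among the characterizations in \cite[Theorem 9.3.3]{Gra}), not a consequence of the one-sided inequality of Lemma \ref{inversedoubling} applied to complements. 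With this substitution the proof is complete.
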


\begin{theorem}\label{technicallemma}
Under \eqref{monotone}-\eqref{ellipticity}, let $\la $ and $\tau$ be as in Proposition \ref{contra}.
Then for any weight $w\in A_\infty$ and any $\epsilon>0$, there exist constants $\ga=\ga(n,p,\La_0,\La_1,\epsilon, [w]_{\infty})>0$ and 
$\overline{\de}=\overline{\de}(n,p,\La_0,\La_1,\epsilon, [w]_{\infty})>0$ such that the following holds.
Suppose that $u \in W^{1, \, \pmd}_{0}(\Omega)$, $\de\in(0,\overline{\de})$, is a very weak solution of \eqref{basicpde}
in a \gflt domain $\Om$, with $[\aa]_{\tau}^{R_0} \leq \ga$. Suppose also that 
 $\{ B_{r}(y_i)\}_{i=1}^{L}$ is a sequence of balls with centers $y_i\in \overline{\Om}$ and a common  radius $0<r \leq R_0/4000$ that covers $\Omega$. If
for all $i=1,\dots, L$
\begin{equation}\label{hypo1bdry}
w(\{x\in \Omega: \mm(|\nabla u|^{\pmd})^{\frac{1}{\pmd}} (x) > \lambda \}) < \epsilon \, w(B_{r}(y_{i})),
\end{equation}
then for any $s>0$ and any integer $k\geq 1$ there holds
\beas
 w(\{x\in& \Omega: \mm(|\nabla u|^{\pmd})^{\frac{1}{\pmd}} (x) > \la^k \})^s \leq \\
&\leq \sum_{i=1}^{k} (A \ep)^{s i} \, w(\{ x\in \Omega : \mm(|{\bff}|^{\pmd}\chi_{\Om})^{\frac{1}{\pmd}}(x) > \ga \lambda^{(k-i)}\})^s + \\
&\qquad  + (A\ep)^{s k} \, w(\{x\in \Omega : \mm(|\nabla u|^{\pmd})^{\frac{1}{\pmd}}(x) > 1\})^s,
\enas
where the constant $A = A(n,[w]_{\infty})$. 
\end{theorem}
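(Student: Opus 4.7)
The plan is to prove Theorem \ref{technicallemma} by a Calder\'on--Zygmund iteration: obtain the base case $k=1$ by combining Proposition \ref{contra} with the weighted Vitali-type covering Lemma \ref{vitalibdry}, then iterate via a homogeneity-scaling of the equation that reduces the level $\lambda^k$ to the level $\lambda$.

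\textbf{Base case ($k=1$).} Introduce the shorthand
\[
E_j := \{x\in\Om: \mm(|\nabla u|^{\pmd})^{\fpmd}(x) > \lambda^j\}, \qquad F_j := \{x\in\Om: \mm(|\bff|^{\pmd}\chi_{\Om})^{\fpmd}(x) > \gamma\lambda^j\}.
\]
I would apply Lemma \ref{vitalibdry} with $C := E_1$ and $D := E_0\cup F_0$. Assumption (1) of that lemma is exactly \eqref{hypo1bdry} (note $2r \le R_0/2000 < R_0/1200$, so the radius constraint of Proposition \ref{contra} is met). Assumption (2) is the contrapositive of Proposition \ref{contra}: if $w(E_1\cap B_\rho(x))\ge \epsilon w(B_\rho(x))$ for some $\rho\le 2r$, then $B_\rho(x)\subset D$. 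This yields $w(E_1)\le B\epsilon\,(w(E_0)+w(F_0))$ with $B=B(n,[w]_\infty)$, which is the $k=1$ case after raising to the $s$-th power.

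\textbf{Inductive step via scaling.} For $k\ge 2$, define $u_k := u/\lambda^{k-1}$ and $\bff_k := \bff/\lambda^{k-1}$. A direct substitution shows that $u_k$ is a very weak solution of
\[
\dv \tilde{\aa}(x,\nabla u_k) = \dv |\bff_k|^{p-2}\bff_k, \qquad \tilde{\aa}(x,\xi) := \lambda^{-(p-1)(k-1)}\aa(x,\lambda^{k-1}\xi).
\]
The homogeneity of the $p$-Laplacian structure makes this PDE scaling-invariant: a short computation shows that $\tilde{\aa}$ satisfies \eqref{monotone}--\eqref{ellipticity} with the \emph{same} constants $\La_0,\La_1$, and $\Upsilon(\tilde{\aa},B)(x)=\Upsilon(\aa,B)(x)$ pointwise, so $[\tilde{\aa}]_\tau^{R_0}\le\gamma$ is inherited. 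Since $\lambda>1$, we have $\{\mm(|\nabla u_k|^{\pmd})^{\fpmd}>\lambda\}=E_k\subset E_1$, so hypothesis \eqref{hypo1bdry} holds for $u_k$ with the same $\epsilon$. Applying the base case to $u_k$ yields the one-step recursion
\[
w(E_k)\le A\epsilon\,\bigl(w(E_{k-1})+w(F_{k-1})\bigr),\qquad A=A(n,[w]_\infty).
\]
Iterating this recursion produces $w(E_k)\le (A\epsilon)^k w(E_0) + \sum_{i=1}^{k}(A\epsilon)^i w(F_{k-i})$.

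\textbf{Raising to the $s$-th power and main obstacle.} For $0<s\le 1$, the subadditivity $(a_1+\cdots+a_N)^s\le a_1^s+\cdots+a_N^s$ gives the claimed bound verbatim. For $s>1$ the same conclusion follows by inducting directly on $k$ for the quantity $w(E_k)^s$, using $(a+b)^s\le 2^{s-1}(a^s+b^s)$ at each step; the $s$-dependent factor $2^{s-1}$ can be absorbed into $A$ (or, equivalently, into $\epsilon$ by choosing $\gamma$ smaller in the application of Proposition \ref{contra}). The main technical obstacle I expect is the verification of homogeneity-invariance for $\tilde\aa$: the exponents in $\tilde\aa$ are chosen precisely so that both the monotonicity constant $\La_0$ and the oscillation $\Upsilon$ remain unchanged under the rescaling, and this must be checked carefully because Proposition \ref{contra} requires the \emph{same} $\gamma$ and $\tau$ at every level of the iteration. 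Once this scaling invariance is established, the Vitali-type lemma does the combinatorial work and the rest of the proof is essentially bookkeeping.
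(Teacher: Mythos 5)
Your proposal is correct and follows essentially the same route as the paper: the base case $k=1$ via Proposition \ref{contra} combined with Lemma \ref{vitalibdry}, and the inductive step via the $\lambda$-rescaling $\tilde{\aa}(x,\xi)=\lambda^{-(p-1)(k-1)}\aa(x,\lambda^{k-1}\xi)$, which preserves \eqref{monotone}--\eqref{ellipticity} and $\Upsilon$ exactly as you claim (the paper rescales by a single factor of $\lambda$ at each step, which is equivalent). Your fallback for $s>1$ --- inducting directly on $w(E_k)^s$ rather than powering up the iterated $(k+1)$-term sum, which would otherwise cost an unabsorbable factor $(k+1)^{s-1}$ --- is precisely what the paper does, using $(a+b)^s\le 2^s(a^s+b^s)$ for all $s>0$ so that $A=2B$ is independent of $s$.
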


\begin{proof}
The theorem will be proved by induction on $k$. 
Given $w\in A_\infty$ and $\epsilon > 0$, we take $\ga=\ga(\ep, [w]_\infty)$ and $\overline{\de}=\overline{\de}(\epsilon, [w]_\infty)$ as in Proposition \ref{contra}. 
The case $k=1$ follows from 
Proposition \ref{contra} and Lemma  \ref{vitalibdry}. Indeed, for $\de\in (0,\overline{\de})$,  let 
\begin{gather*}
C  =  \{ x\in \Omega: \mm(|\nabla u|^{\pmd})^{\frac{1}{\pmd}} (x) > \lambda \} \\
D  =   \{x\in \Omega: \mm(|\nabla u|^{\pmd})^{\frac{1}{\pmd}}(x) > 1 \} \cup \{  x\in \Omega: \mm(|{\bff}|^{\pmd}\chi_{\Om})^{\frac{1}{\pmd}} (x) > \ga \}.
\end{gather*}

Then  from assumption (\ref{hypo1bdry}), it follows that
$w(C) < \epsilon \, w(B_{r}(y_{i}))$
for all $i=1,\dots, L$.  Moreover, if $y\in \Omega$ and $\rho \in (0, 2r)$ such that $w(C\cap B_{\rho}(y) ) \geq \epsilon\, w(B_{\rho}(y)) $,
then $0< \rho \leq R_0/1200$ and  $B_{\rho}(y) \cap \Omega\subset D$ by Proposition \ref{contra}. Thus  all hypotheses of Lemma \ref{vitalibdry} 
are satisfied, which yield, for a constant $B=B(n,[w]_{\infty})$, 
\beas
 w(C)^s & \leq B ^s\,  \ep^s \,  w(D)^{s} \\
& \leq B^s\, 2^s \ep^s \,   w(\{x\in \Omega: \mm(|\nabla u|^{\pmd})^{\frac{1}{\pmd}}(x)  > 1 \})^s + \\
 & \qquad + B^s\, 2^s \ep^s  \, w( \{ x\in \Omega: \mm(|{\bff}|^{\pmd}\chi_{\Om})^{\frac{1}{\pmd}} (x) > \ga \} )^s 
\enas
for any given $s>0$. This proves the case $k=1$ with $A=2 B$.  
Suppose now that the conclusion of the lemma is true for some $k>1$. Normalizing $u$ to
$u_{\lambda}  = u/\lambda$ and ${\bff}_{\lambda} = {\bff}/\lambda$, we see that for every $i=1, \dots, L$, 
\beas
w(\{ x\in \Omega&:  \mm(|\nabla u_{\lambda}|^{\pmd})^\fpmd (x) > \lambda \} ) = \\
&=  w(\{ x\in \Omega : \mm(|\nabla u|^{\pmd})^\fpmd (x)> \lambda^{2} \} )\\
&\leq w(\{ x\in \Omega : \mm(|\nabla u|^{\pmd})^\fpmd > \lambda \} )\\
 &< \epsilon \, w(B_{r}(y_{i})).
\enas
Here we  used the fact that $\lambda>1$ in the first inequality. Note that $u_\lambda$ solves 
\begin{eqnarray*}
\left\{ \begin{array}{rcl}
 \text{div}\,\tilde{\aa}(x, \nabla u_{\lambda})&=&\text{div}~|{\bf f}_{\lambda}|^{p-2}{\bf f}_{\lambda}  \quad \text{in} ~\Omega, \\
u&=&0  \quad \text{on}~ \partial \Omega,
\end{array} \right.
\end{eqnarray*}
where $\tilde{\aa}(x, \xi)={\aa}(x, \lambda \xi)/\lambda^{p-1}$ which obeys the same structural conditions \eqref{monotone}-\eqref{ellipticity}.
Thus by inductive hypothesis,  it follows that
\bea
\label{longchain}
w(\{ x&\in \Omega:  \mm(|\nabla u_{\lambda}|^{\pmd})^\fpmd (x) > \lambda^k \} )^s \\
&\leq \sum_{i=1}^{k} (A \ep)^{s i} \, w(\{ x\in \Omega : \mm(|{\bff}_\la|^{\pmd}\chi_{\Om})^{\frac{1}{\pmd}}(x) > \ga \lambda^{(k-i)}\})^s + \\
&\qquad  + (A\ep)^{s k} \, w(\{x\in \Omega : \mm(|\nabla u_\la|^{\pmd})^{\frac{1}{\pmd}}(x) > 1\})^s.
\ena
%
Finally,  applying the case $k = 1$ to the last term in \eqref{longchain} we conclude that
\beas
w(\{ x\in &\Omega : \mm(|\nabla u|^{\pmd})^\fpmd(x) > \lambda^{k+1} \})^s  \\
&\leq \sum_{i=1}^{k+1}(A \ep)^{s i} \, w(\{ x\in \Omega: \mm(|{\bff}|^{\pmd}\chi_{\Om})^\fpmd(x) > \ga \lambda^{k+1-i}\})^s\\
&\qquad  + \, (A \ep)^{s (k+1) } \, w(\{ x\in \Omega: \mm(|\nabla u|^{\pmd})^\fpmd(x) > 1 \})^s.
\enas
This completes the proof of the theorem.
\end{proof}

The following result is a characterization of functions in weighted Lorentz space and can easily be proved using methods in standard measure theory.
\begin{lemma}\label{distribution}
 Assume that $g\geq 0$ is a measurable function  in a bounded subset $\Om \subset \mathbb{R}^{n}$.
 Let $\theta > 0$, $\Lambda > 1$ be constants, and let
 $w$ be a weight in $\RR^n$.
 Then for $0< q, t< \infty$, we have
\begin{eqnarray*}
g \in L_w(q,\, t)(\Om) \Longleftrightarrow S := \sum_{k\geq 1} \Lambda^{t k}w(\{ x\in \Om: g(x) > \theta \Lambda^{k}\})^{\frac{t}{q}} < +\infty.
\end{eqnarray*}
Moreover, there exists a positive constant $C = C(\theta,\La,t)>0$ such that
\begin{eqnarray*}
C^{-1}\, S \leq \|g\|^{t}_{L_w(q,\, t)(\Om)} \leq C\, (w(\Om)^{\frac{t}{q}} + S).
\end{eqnarray*}
Analogously, for $0<q<\infty$ and $t=\infty$ we have
\begin{eqnarray*}
C^{-1} T\leq \norm{g}_{L_w(q,\, \infty)(\Om)}\leq C\, (w(\Om)^{\frac{1}{q}}+T) ,
\end{eqnarray*}
where $T$ is the quantity
\begin{eqnarray*}T:= \sup_{k\geq 1} \Lambda^k w(\{ x\in\Om: |g(x)|>\theta \Lambda^k\})^{\frac{1}{q}}.\end{eqnarray*}
\end{lemma}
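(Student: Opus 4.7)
The plan is to use the layer cake identity for the Lorentz quasinorm together with a geometric decomposition of $(0,\infty)$ into the interval $(0,\theta]$ and the $\Lambda$-adic annuli $[\theta \Lambda^{k-1}, \theta \Lambda^{k}]$, $k\geq 1$. On each such annulus the distribution function $\alpha \mapsto w(\{g>\alpha\})$ is monotone, so it is comparable to its endpoint values, and the $\alpha$-integral is elementary. This reduces the continuous integral in the definition of $\|g\|_{L_w(q,t)(\Om)}$ to a sum that matches $S$ up to a reindexing shift.

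For $0<q,t<\infty$ I would start from
\[
\|g\|_{L_w(q,t)(\Om)}^{t} \;=\; q\integral_{0}^{\infty} \alpha^{t-1} \, w(\{x\in\Om : g(x)>\alpha\})^{t/q}\, d\alpha,
\]
and split the integral as $\integral_{0}^{\theta} + \sum_{k\geq 1}\integral_{\theta\Lambda^{k-1}}^{\theta\Lambda^{k}}$. On $(0,\theta]$ the distribution function is bounded by $w(\Om)$, so this piece is at most $\theta^{t} w(\Om)^{t/q}/t$. On $[\theta\Lambda^{k-1},\theta\Lambda^{k}]$ monotonicity yields
\[
w(\{g>\theta\Lambda^{k}\})^{t/q}\;\leq\; w(\{g>\alpha\})^{t/q}\;\leq\; w(\{g>\theta\Lambda^{k-1}\})^{t/q},
\]
while the $\alpha$-integral gives exactly $\theta^{t}\Lambda^{t(k-1)}(\Lambda^{t}-1)/t$. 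Summing the lower inequalities over $k\geq 1$ directly produces $c(\theta,\Lambda,t)\cdot S$, proving the lower bound $\|g\|_{L_w(q,t)(\Om)}^{t}\geq C^{-1}S$. Summing the upper inequalities, a shift of index $j=k-1$ converts the sum to $\sum_{j\geq 0}\Lambda^{tj}w(\{g>\theta\Lambda^{j}\})^{t/q}$; the $j=0$ term is at most $w(\Om)^{t/q}$ and the $j\geq 1$ terms are exactly $S$, yielding the upper bound $\|g\|_{L_w(q,t)(\Om)}^{t}\leq C\bigl(w(\Om)^{t/q}+S\bigr)$.

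For $t=\infty$ the argument is parallel but simpler. Any $\alpha>0$ either lies in $(0,\theta]$, in which case $\alpha \, w(\{g>\alpha\})^{1/q}\leq \theta \, w(\Om)^{1/q}$, or in some annulus $[\theta\Lambda^{k-1},\theta\Lambda^{k}]$ with $k\geq 1$, in which case
\[
\alpha\, w(\{g>\alpha\})^{1/q}\;\leq\;\theta\Lambda^{k}\, w(\{g>\theta\Lambda^{k-1}\})^{1/q}\;=\;\Lambda\cdot\theta\Lambda^{k-1}\, w(\{g>\theta\Lambda^{k-1}\})^{1/q}.
\]
Taking the supremum and reindexing recovers the claimed two-sided bound $C^{-1}T\leq \|g\|_{L_w(q,\infty)(\Om)}\leq C(w(\Om)^{1/q}+T)$, since the sup over $k\geq 0$ versus $k\geq 1$ differs only by the $k=0$ term which is absorbed into $w(\Om)^{1/q}$. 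The reverse direction $T\leq \|g\|_{L_w(q,\infty)(\Om)}$ is immediate from testing the definition at the values $\alpha=\theta\Lambda^{k}$.

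There is no real obstacle here; the proof is a routine discretization. The only technical care needed is the reindexing step $j=k-1$ in the upper bound, which is what forces the extra $w(\Om)^{t/q}$ term on the right-hand side, and keeping track of constants that depend only on $\theta$, $\Lambda$, and $t$ (and not on $g$, $q$, or $w$).
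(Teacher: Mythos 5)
Your argument is correct and is precisely the routine $\Lambda$-adic discretization of the layer-cake formula that the paper has in mind (the paper omits the proof, remarking only that it follows from standard measure theory); the splitting into $(0,\theta]$ plus the annuli $[\theta\Lambda^{k-1},\theta\Lambda^{k}]$, the monotonicity of the distribution function, and the index shift producing the extra $w(\Om)^{t/q}$ term are exactly the expected steps. The only small caveat is that with the paper's normalization, which carries the prefactor $q$ in front of the integral defining $\|g\|^{t}_{L_w(q,\,t)(\Om)}$, your constants in fact acquire a factor of $q^{\pm 1}$, so strictly one gets $C=C(\theta,\Lambda,t,q)$ rather than $C=C(\theta,\Lambda,t)$; this is immaterial in the applications, where $q$ stays in a bounded range.
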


We are now ready to obtain the main result of this section.

\begin{theorem}\label{main_theorem-M}
Suppose that $\aa$ satisfies \eqref{monotone}-\eqref{ellipticity}. Let $M>1$ and let $w$ be an $A_\infty$ weight. There exist  constants $\tau=\tau(n,p,\La_0, \La_1)>1$, $\de=\de(n,p,\La_0, \La_1, M, [w]_\infty)>0$ and $\ga=\ga(n,p,\La_0, \La_1, M, [w]_\infty)>0$  such that the following holds for
any $t\in (0,\infty]$ and $q\in (0, M]$. 
If $u \in W_0^{1,p-\de}(\Om)$ is a very weak solution of \eqref{basicpde} in a \gflt domain $\Om$ with $[\mathcal{A}]_\tau^{R_0}\leq \ga$,
then one has the  estimate 
\bea \label{globalbound-max}
\|\nabla u\|_{L_w(q,t)(\Om)} \leq C \|\mathcal{M}(|\bff|^{p-\de})^{\frac{1}{p-\de}} \|_{L_w(q,t)(\Om)},
\ena
where  the  constant $C = C(n,p,\La_0,\La_1, t, q, M, [w]_\infty, {\rm diam}(\Om)/R_0)$. 
\end{theorem}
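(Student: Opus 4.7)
The strategy is to translate the iterated level-set bound of Theorem \ref{technicallemma} into a weighted Lorentz-norm inequality via the distributional characterization of Lemma \ref{distribution}. Since $|\nabla u(x)|\le \mm(|\nabla u|^{\pmd})^{\fpmd}(x)$ a.e.\ in $\Om$ by Lebesgue differentiation, it is enough to prove the stronger estimate with $\mm(|\nabla u|^{\pmd})^{\fpmd}$ in place of $|\nabla u|$ on the left-hand side. I will rescale $u$ and $\bff$ by a common factor $N>0$ so that the smallness hypothesis \eqref{hypo1bdry} of Theorem \ref{technicallemma} is satisfied by $u_N:=u/N$ on each ball of a fixed covering. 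Note that $u_N$ still solves an equation of the form \eqref{basicpde} with the rescaled nonlinearity $\tilde{\aa}(x,\xi):=\aa(x,N\xi)/N^{p-1}$ and source $\bff_N:=\bff/N$, both satisfying the same structural conditions \eqref{monotone}-\eqref{ellipticity}.

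Fix $q\in(0,M]$ and $t\in(0,\infty]$, and let $\la,\tau$ be the constants produced by Theorem \ref{technicallemma}. Choose $\ep>0$ small enough that $\la^t(A\ep)^{t/q}\le 1/2$ for all $q\le M$; this dictates $\ep$ as a function of $n,p,\La_0,\La_1,M,t,[w]_{\infty}$. Let $\ga,\overline{\de}$ be the associated constants in Theorem \ref{technicallemma}. Cover $\overline{\Om}$ by a finite family $\{B_r(y_i)\}_{i=1}^L$ of common radius $r=R_0/4000$ and centers in $\overline{\Om}$; the number $L$ depends only on $n$ and $\diam(\Om)/R_0$. Set
\beas
N := N_0 + \|\mm(|\bff|^{\pmd}\chi_\Om)^{\fpmd}\|_{L_w(q,t)(\Om)}\big/w(\Om)^{1/q},
\enas
where $N_0$ is a sufficiently large multiple of $\|\nabla u\|_{L^{\pmd}(\Om)}$ so that, using the weak-type $(1,1)$ bound for $\mm$ combined with the $A_\infty$ inclusion property (Lemma \ref{inversedoubling}), the rescaled solution $u_N$ verifies \eqref{hypo1bdry} on every $B_r(y_i)$.

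Applying Theorem \ref{technicallemma} to $u_N$ with $s=t/q$ (in the case $t<\infty$), multiplying both sides by $\la^{tk}$, summing in $k\ge 1$, and interchanging the order of summation via the substitution $j=k-i$ (the geometric series $\sum_{i\ge 1}(\la^t(A\ep)^{t/q})^i$ converges thanks to the choice of $\ep$) yields
\beas
\sum_{k\ge 1}\la^{tk}w(E_k(u_N))^{t/q} \le C\sum_{j\ge 0}\la^{tj}w(F_j(\bff_N))^{t/q}+C\, w(\Om)^{t/q},
\enas
where $E_k(u_N) = \{x\in\Om:\mm(|\nabla u_N|^{\pmd})^{\fpmd}(x)>\la^k\}$ and $F_j(\bff_N) = \{x\in\Om:\mm(|\bff_N|^{\pmd}\chi_\Om)^{\fpmd}(x)>\ga\la^j\}$. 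Lemma \ref{distribution} with $\Lambda=\la$ recognizes both sides as weighted Lorentz norms and gives
\beas
\|\mm(|\nabla u_N|^{\pmd})^{\fpmd}\|_{L_w(q,t)(\Om)}\le C\,\|\mm(|\bff_N|^{\pmd}\chi_\Om)^{\fpmd}\|_{L_w(q,t)(\Om)}+C\, w(\Om)^{1/q}.
\enas
Un-normalizing by multiplying through by $N$ and inserting the definition of $N$ absorbs the additive $w(\Om)^{1/q}$-term into the right-hand side, yielding \eqref{globalbound-max}. The case $t=\infty$ proceeds identically using the sup-version of Lemma \ref{distribution}.

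The main obstacle is calibrating $N$: it must be large enough that \eqref{hypo1bdry} holds simultaneously on every covering ball, yet controlled enough that un-normalization produces a bound by $\|\mm(|\bff|^{\pmd}\chi_\Om)^{\fpmd}\|_{L_w(q,t)(\Om)}$ rather than some uncontrolled quantity. Executing this requires combining the global $L^{\pmd}$-energy estimate for very weak solutions---a consequence of Theorems \ref{interior_higher_integrability} and \ref{higher-integrability-boundary}, giving $\|\nabla u\|_{L^{\pmd}(\Om)}\le C\|\bff\|_{L^{\pmd}(\Om)}$---with a comparison between $\|\bff\|_{L^{\pmd}(\Om)}$ and $\|\mm(|\bff|^{\pmd}\chi_\Om)^{\fpmd}\|_{L_w(q,t)(\Om)}$ that follows from the boundedness of $\Om$, the weak-type $(1,1)$ bound for $\mm$, and the $A_\infty$ property of $w$.
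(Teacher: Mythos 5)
Your proposal follows essentially the same route as the paper: rescale $u$ by a suitable $N$ so that the smallness hypothesis \eqref{hypo1bdry} holds on a fixed covering of $\Om$, apply Theorem \ref{technicallemma}, sum the level-set bounds with weight $\la^{tk}$, interchange the order of summation, and convert back to Lorentz norms via Lemma \ref{distribution}, finally absorbing the additive $N\,w(\Om)^{1/q}$ term using the fact that $N$ is controlled pointwise by $\mm(|\bff|^{\pmd}\chi_\Om)^{\fpmd}$ on $\Om$.

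One calibration point to fix: you choose $\ep$ so that $\la^{t}(A\ep)^{t/q}\le 1/2$, which forces $\ep$ (and hence $\ga$ and $\de$, which come from Theorem \ref{technicallemma} applied with this $\ep$) to depend on $t$; the theorem asserts these constants depend only on $n,p,\La_0,\La_1,M,[w]_\infty$. It suffices to take $\ep=\la^{-M}A^{-1}2^{-1}$, so that $A\ep\la^{q}\le 1/2$ for all $q\le M$; the geometric series then has ratio $(A\ep\la^{q})^{t/q}\le 2^{-t/q}<1$ and only the final constant $C$ picks up the dependence on $t$ and $q$.
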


\begin{remark}
{\rm The introduction of $M$ in the above theorem is just for a technical reason. It ensures that the constant $\delta$ is independent of $q$ as the proof of the 
theorem reveals. 
}
\end{remark}

\begin{remark}\label{upper-const}
{\rm It follows also from the proof of Theorem \ref{main_theorem-M}  that if $(\Xi_0, \Xi_1)$ is pair of $A_\infty$ constants of $w$
such that $\max\{ \Xi_0, 1/\Xi_1\}\leq \overline{\om}$ then the constants $\delta, \gamma$ and $C$ above can be chosen to depend just on the upper-bound $\overline{\om}$ instead of $(\Xi_0, \Xi_1)$.
} 
\end{remark}


\begin{proof}
Let  $\la = \la(n,p,\La_0,\La_1)$ and $\tau=\tau(n,p,\La_0,\La_1)$ be as in Theorem \ref{technicallemma}.
Take $\ep = \la^{-M} A^{-1} 2^{-1}$ and  choose $\delta= \overline{\delta}(n,p,\La_0,\La_1, \ep, [w]_{\infty})/2$,
where $A=A(n,[w]_\infty)$ and $\overline{\delta}$ are as in Theorem \ref{technicallemma}; thus 
$\delta= \delta(n,p,\La_0,\La_1, M, [w]_{\infty})$, which is independent of $q$.
Using Theorem \ref{technicallemma} we also get a 
constant $\ga = \ga(n,p,\La_0,\La_1, M, [w]_{\infty})>0$ for this choice of $\ep$.

We shall prove \eqref{globalbound-max} only for $t \in (0, \, \infty)$, as for $t= \infty$ the proof is just  similar. Choose a finite number of points $\{ y_i\}_{i=1}^L \subset \Om$ and a ball $B_0$ of radius $2 \diam (\Om)$ such that
\beas
\Om \subset \bigcup_{i=1}^L B_r(y_i) \subset B_0,
\enas
where $r = \min\{R_0/4000, \diam(\Om)\}$. We claim that we can choose $N$ large such that for $u_N= u/N$ and for all $i=1,\ldots,L$,
\bea
\label{smallness}
w(\{ x\in \Omega: \mm(|\nabla u_{N}|^{\pmd})^\fpmd(x) > \lambda) < \epsilon \, w(B_{r}(y_{i})).
\ena
 Indeed from the weak-type $(1,1)$ estimate for the maximal function, there exists a constant $C(n)>0$ such that 
\beas
|\{x \in \Omega: \mm(|\nabla u_{N}|^{\pmd})^\fpmd(x) > \lambda\}| < \frac{C(n)}{(\lambda N)^{\pmd}} \integral_{\Omega}|\nabla u|^{\pmd}\, dx. 
\enas

If $(\Xi_0,\Xi_1)$  is a pair of $\aw_{\infty}$ constants of $w$, then using Lemma \ref{inversedoubling}, we see that 
\bea
\label{equa1}
w(\{x \in \Omega:& \mm(|\nabla u_{N}|^{\pmd})^\fpmd(x) > \lambda\})  \\
& < \Xi_0 \left(\frac{C(n)}{(\lambda N)^{\pmd}|B_0|} \integral_{\Omega}|\nabla u|^{\pmd}\, dx. \right)^{\Xi_1} w(B_0).
\ena

Also, there are $C_1=C_1(n, [w]_\infty)\geq 1$ and $p_1=p_1(n, [w]_\infty)\geq 1$ such that
\bea
\label{equa2}
w(B_0) \leq C_1 \left( \frac{|B_0|}{|B_r(y_i)|} \right)^{p_1} w(B_r(y_i))
\ena
for every $i=1,2,\dots, L$. This follows from the so-called {\it strong doubling property} of $A_\infty$ weights (see, e.g., \cite[Chapter 9]{Gra}). 
In view of  \eqref{equa1} and \eqref{equa2}, we now choose $N$ such that
\beas
\frac{C(n)}{(\lambda N)^{\pmd}|B_0| } \integral_{\Omega}|\nabla u|^{\pmd}\, dx &= \left(\frac{|B_r(y_i)|}{|B_0|}\right)^{p_1/\Xi_1} \left( \frac{\ep}{\Xi_0 C_1} \right)^{1/\Xi_1}. 
\enas

This gives the desired  estimate \eqref{smallness}. Note that for this $N$ we have 
\bea\label{N-est}
N  &\leq  C |B_0|^{\frac{-1}{\pmd}} \|\nabla u\|_{L^{\pmd}(\Om)} \leq  C  |B_0|^{\frac{-1}{\pmd}}  \|\bff \chi_{\Om} \|_{L^{\pmd}(B_0)}\\
&\leq  C    \mathcal{M}(|\bff|^{\pmd} \chi_{\Om})(x)^{\frac{1}{\pmd}}
\ena
for all $x\in\Om$. Here  $C = C(n,p,\La_0,\La_1, M,  [w]_{\infty}, \diam(\Om)/R_0)$ and the second inequality follows from Theorem \cite[Theorem 1.2]{AN}.

With this $N$, we denote by $$S= \sum_{k = 1}^{\infty}{\lambda}^{t k}w(\{ x\in \Om: \mm(|\nabla u_{N}|^{\pmd})^\fpmd (x) > {\lambda}^{k} \})^{\frac{t}{q}}$$
and for $J\geq 1$ let 
\beas
S_J= \sum_{k = 1}^{J}{\lambda}^{t k}w(\{ x\in \Om: \mm(|\nabla u_{N}|^{\pmd})^\fpmd (x) > {\lambda}^{k} \})^{\frac{t}{q}}
\enas
be its partial sum.  By Lemma \ref{distribution}, we see that 
\bea
\label{equa3}
C^{-1} S \leq \| \mm(|\nabla u_N|^{\pmd})^{\fpmd} \|_{L_w(q,t)(\Om)}^t \leq C(w(\Om)^{\frac{t}{q}} + S).  
\ena

By \eqref{smallness} and Theorem \ref{technicallemma}, we find
\beas
S_J \leq & \sum_{k = 1}^{J}{\la}^{tk} \left[ \sum_{j=1}^{k} (A \ep)^{\frac{t}{q}j} w(\{ x\in \Om: \mm(|{\bff}_{N}|^{\pmd}\chi_{\Om})^\fpmd(x) > \ga \la^{(k-j)}\})^{\frac{t}{q}} \right]\\
& \quad + \sum_{k = 1}^{J} \la^{tk}  (A \ep)^{\frac{t}{q}k}  w(\{x\in \Om: \mm(|\nabla u_{N}|^{\pmd})^\fpmd(x) > 1\})^{\frac{t}{q}}. \\
\enas
Here recall that $\ep = \la^{-M} A^{-1} 2^{-1}$ ans  $A = A(n, [w]_{\infty})$.
Now interchanging the order of summation,  we  get
\beas
S_J & \leq  \sum_{j = 1}^{J} (A \ep \la^q)^{\frac{t}{q} j}  \left[ \sum_{k=j}^{J} \la^{t(k-j)}  w(\Om\cap \{\mm(|{\bff}_{N}|^{\pmd}\chi_{\Om})^\fpmd > \ga \la^{(k-j)}\})^{\frac{t}{q}} \right]\\
& \quad + \sum_{k = 1}^{J} (A \ep \la^{q})^{\frac{t}{q}k}    w(\{x\in \Om: \mm(|\nabla u_{N}|^{\pmd})^\fpmd(x) > 1\})^{\frac{t}{q}} \\
& \leq  C \left[ \| \mm(|\bff_N|^{\pmd})^\fpmd\|_{L_w(q,t)(\Om)}^t + w(\Om)^{\frac{t}{q}}    \right] \sum_{j=1}^{\infty} 2^{-\frac{t}{q}j} \\
& \leq  C \left[ \| \mm(|\bff_N|^{\pmd})^\fpmd\|_{L_w(q,t)(\Om)}^t + w(\Om)^{\frac{t}{q}}    \right] 
\enas
for a  constant $C = C(n,p,\La_0,\La_1, q, t, M, [w]_{\infty})$.  Letting $J \rightarrow \infty$  and   making use of \eqref{equa3}, we arrive at
\beas
 \| \mm(|\nabla u_N|^{\pmd})^\fpmd\|_{L_w(q,t)(\Om)}^t  \leq C  \left[ \| \mm(|\bff_N|^{\pmd})^\fpmd\|_{L_w(q,t)(\Om)}^t + w(\Om)^{\frac{t}{q}}   \right] .
\enas

This gives
\beas
 \| \nabla u\|_{L_w(q,t)(\Om)}  \leq C  \left[ \| \mm(|\bff|^{\pmd}\chi_\Om)^\fpmd\|_{L_w(q,t)(\Om)} + N w(\Om)^{\frac{1}{q}}    \right],
\enas
which in view of \eqref{N-est} yields the desired estimate.
\end{proof}

\begin{appendices}
\section{Appendix: Proof of Theorem \ref{theorem_extrapolation}}\label{appendix-A}

In this appendix, we provide a complete proof of Thereom \ref{theorem_extrapolation}.

\begin{proof}
First we consider the sub-natural case $p-1<q<p$. To that end, let $w \in \awpq{q}{p-1}$ and suppose that $\bff \in L^p(\Om,\RR^n) \cap L^q_w(\Om,\RR^n)$ satisfying 
\eqref{weight11} for all $v \in \awpq{p}{p-1}$. Extend both $\bff$ and $u$ by zero to $\RR^n \setminus \Om$ and define 
$$\mr(\bff)(x) := \sum_{k=0}^{\infty} \frac{\mm^{(k)}(|\bff|^{p-1})(x)}{2^k \|\mm\|^k_{L^{q/(p-1)}_w \rightarrow L^{q/(p-1)}_w}}.$$ 
Here $\mm^{(k)}=\mm\circ\mm\circ\cdots\circ\mm$ ($k$ times) and  note that (see, e.g., \cite[Chapter 9]{Gra}) 
\bea \label{ww10}\|\mm\|_{L^{q/(p-1)}_w \rightarrow L^{q/(p-1)}_w } \leq C(n,p,q, [w]_{\frac{q}{p-1}}).\ena 

Now it is easy to  observe from the definition of $\mr(\bff)$ that 
\bea \label{eq111} |\bff(x)|^{p-1} \leq \mr(\bff)(x) , \quad \textrm{and} \quad \|\mr(\bff) \|_{L^{q/(p-1)}_w}\leq 2 \|\bff \|_{L^q_w}^{p-1}. \ena 

An important result which we shall need is the following estimate:
\bea \label{eq112} \mr(\bff)^{-\frac{(p-q)}{(p-1)}}\,  w \in \awpq{p}{p-1}\quad   \textrm{with} \quad [\mr(\bff)^{-\frac{(p-q)}{(p-1)}}\,  w ]_{\frac{p}{p-1}} \leq C([w]_{\frac{q}{p-1}})	.\ena
The proof of \eqref{eq112} is obtained as follows:  it follows from \eqref{ww10} and the definition of $\mr(\bff)$
that $$\mm(\mr(\bff)) \leq C([w]_{\frac{q}{p-1}}) \mr(\bff),$$ and thus
we get  that 
$$\mr(\bff)(x)^{-1} \leq C([w]_{\frac{q}{p-1}}) \left( \frac{1}{|B|} \integral_B \mr(\bff) \, dy \right)^{-1}$$
for any ball $B\subset\RR^n$ containing $x$.
Set now $s=\frac{(p-q)}{(p-1)} \frac{q}{p}$. Using the last inequality, we find for any ball $B\subset\RR^n$,
\bea
\label{ww1}
\fintegral_B \mr(\bff)^{-s \frac{p}{q}} \, w \, dx \leq C([w]_{\frac{q}{p-1}}) \left( \fintegral_B \mr(\bff) \, dy \right)^{-s\frac{p}{q}} \left( \fintegral_B w(x)\, dx \right).
\ena

On the other hand, by H\"older's inequality  there holds
\bea
\label{ww2}
&\left( \fintegral_B [\mr(\bff)^{-s \frac{p}{q}} w(x)]^{1-p}\, dx \right)^{\frac{1}{p-1}}  = \left( \fintegral_B \mr(\bff)^{p-q} w(x)^{1-p}\, dx \right)^{\frac{1}{p-1}} \\
& \qquad \qquad \qquad \qquad  \leq \left( \fintegral_B \mr(\bff) \, dx \right)^{\frac{p-q}{p-1}} \left( \fintegral_B w(x)^{\frac{1-p}{1-p+q}} \, dx \right)^{\frac{1-p+q}{p-1}}.
\ena

Multiplying \eqref{ww1} by \eqref{ww2}, we obtain the conclusion stated in \eqref{eq112}.

We now obtain by H\"older's inequality
\bea
\label{ww7}
&\integral_{\RR^n} |\nabla u|^q w \, dx = \integral_{\RR^n} |\nabla u|^q \, \mr(\bff)^{-s} \mr(\bff)^{s} w \, dx \\
& \leq \left( \integral_{\RR^n} |\nabla u|^p \mr(\bff)^{-s.\frac{p}{q}}  w \, dx\right)^{q/p}   \left( \integral_{\RR^n} \mr(\bff)^{s.\frac{q}{p-q}} w \, dx \right)^{(p-q)/p}.
\ena

By making use of the hypothesis of the theorem along with \eqref{eq111}, we can then estimate the right hand side of \eqref{ww7} as
\beas
\integral_{\RR^n} |\nabla u|^q w \, dx& \leq C\left( [\mr(\bff)^{-s.\frac{p}{q}} w]_{\frac{p}{p-1}}  \right) \left( \integral_{\RR^n} |\bff|^p\,  \mr(\bff)^{-s.\frac{p}{q}}  w \, dx\right)^{q/p} \times \\
& \qquad \qquad \qquad \times  \left( \integral_{\RR^n} \mr(\bff)^{s.\frac{q}{(p-q)}} w \, dx \right)^{(p-q)/p}\\
& \leq C\left( [\mr(\bff)^{-s.\frac{p}{q}} w]_{\frac{p}{p-1}}  \right) \left( \integral_{\RR^n} \mr(\bff)^{\frac{q}{p-1}} w \, dx \right)\\ 
& \leq C\left( [\mr(\bff)^{-s.\frac{p}{q}} w]_{\frac{p}{p-1}}  \right) 2^{\frac{q}{p-1}} \|\bff\|_{L^q_w}^q. 
\enas

Then applying \eqref{eq112}, we obtain  \eqref{extrapolation_result} in the case $p-1<q<p$. 

We now consider the case $p<q<\infty$ and in this regard, we fix a $w \in \awpq{q}{p-1}$ and let $\bff \in L^p(\Om,\RR^n) \cap L^q_w(\Om,\RR^n)$ be as in the  theorem. For any $h \in L^{(q/p)'}_w(\RR^n)$, define $$\mr'(h)(x) := \sum_{k=0}^{\infty} \frac{(\mm')^{(k)}(|h|^{\frac{\left(q/p\right)'}{\left(q/(p-1)\right)'}})(x)}{2^k \|\mm'\|^k_{L^{(q/(p-1))'}_w\rightarrow L^{(q/(p-1))'}_w}},$$
where $\mm'(h) := \frac{\mm(hw)}{w}$ and $(q/p)' = \frac{q}{q-p}$, $(q/(p-1))'=\frac{q}{q-p+1}$ denote the conjugate H\"older exponents. 
Then it is easy to  observe that 
\bea \label{eq113} |h|^{\frac{\left(q/p\right)'}{\left(q/(p-1)\right)'}} (x) \leq \mr'(h)(x) , \  \textrm{and} \ \|\mr'(h) \|_{L^{\left(q/(p-1)\right)'}_w}\leq 2 \|h \|_{L^{(q/p)'}_w}^{\frac{\left(q/p\right)'}{\left(q/(p-1)\right)'}}. \ena

We now choose an $h \in L^{(q/p)'}_w(\RR^n)$ with $\|h\|_{L^{(q/p)')}_w} = 1$ such that 
\bea
\label{eq115} 
\integral_{\RR^n} |\nabla u|^q \, w(x) \, dx = \| |\nabla u|^p \|_{L^{q/p}_w}^{q/p}= \left( \integral_{\RR^n} |\nabla u|^p h(x)\, w(x) \, dx \right)^{q/p}.
\ena

For this choice of $h$, define $H := \left[ \mr'(h) \right]^{\frac{\left(q/(p-1)\right)'}{\left(q/p\right)'}}$. It is easy to see from \eqref{eq113}
that $0 \leq h \leq H$. We now prove the following important estimate:
\bea
\label{ww3}
(Hw) \in \aw_{\frac{p}{p-1}} \quad   \textrm{with} \quad [Hw]_{\frac{p}{p-1}} \leq C([w]_{\frac{q}{p-1}}). 
\ena

Analogous to \eqref{ww10}, we observe that $\mm'(\mr'(h)) \leq C([w]_{\frac{q}{p-1}}) \mr'(h).$ Thus for any ball $B$ containing $x$, 
\beas
(Hw)(x)^{1-p} \leq C([w]_{\frac{q}{p-1}}) \left( \fintegral_B H^{\frac{(q/p)'}{(q/(p-1))'}} w(y) \ dy \right)^{\frac{(q/(p-1))'}{(q/p)'} (1-p)} w(x)^{\frac{1-p}{q-p+1}},
\enas
where we have used the fact that $\left(\frac{(q/(p-1))'}{(q/p)'} -1\right) (p-1) = \frac{1-p}{q-p+1}$.
With this we obtain the estimate
\bea\label{ww4}
&\left( \fintegral_B (Hw)^{1-p} \ dx \right)^{\frac{1}{p-1}}\\
 &\leq C([w]_{\frac{q}{p-1}}) \left( \fintegral_B H^{\frac{(q/p)'}{(q/(p-1))'}} w \ dy \right)^{-\frac{(q/(p-1))'}{(q/p)'} }  \left( \fintegral_B w ^{\frac{1-p}{q-p+1}} \ dx \right)^{\frac{1}{p-1}} 
\ena
for all balls $B\subset\RR^n$.

On the other hand, by H\"older's inequality, we obtain
\bea
\label{ww5}
\fintegral_Q H w \ dx \leq \left( \fintegral_Q H^{\frac{(q/p)'}{(q/(p-1))'}} w \ dx \right)^{\frac{(q/(p-1))'}{(q/p)'}} \left( \fintegral_Q w \ dx \right)^{1-\frac{(q/(p-1))'}{(q/p)'}}.
\ena

Multiplying \eqref{ww4} by \eqref{ww5} and observing that $1-\frac{(q/(p-1))'}{(q/p)'} = \frac{1}{q-p+1}$, we get
\beas
\relax [Hw]_{\frac{p}{p-1}} &\leq C([w]_{\frac{q}{p-1}}) \left( \fintegral_Q w^{\frac{1-p}{q-p+1}} \ dx \right)^{\frac{1}{p-1}}\left( \fintegral_Q w \ dx \right)^{\frac{1}{q-p+1}} \leq C([w]_{\frac{q}{p-1}}),
\enas
which completes the proof of \eqref{ww3}.

Using our hypothesis on ${\bf f}$ and H\"older's inequality  we now obtain
\bea
\label{eq116}
&\integral_{\RR^n} |\nabla u|^p \, h\, w \, dx  \leq \integral_{\RR^n} |\nabla u|^p \, H\, w \, dx \\
& \leq C\left([Hw]_{\frac{p}{p-1}} \right) \integral_{\RR^n} |\bff|^p \, H\, w \, dx \\
&  \leq C\left([Hw]_{\frac{p}{p-1}}\right) \left( \integral_{\RR^n} |\bff|^q \, w \, dx \right)^{p/q}  \left(  \integral_{\RR^n} |H|^{(q/p)'} \, w \, dx \right)^{1/(q/p)'}.
\ena

Concerning the last term on the right, we have 
\bea
\label{eq117}
\integral_{\RR^n} |H|^{(q/p)'} \, w \, dx  & = \integral_{\RR^n} \mr'(h)^{(q/(p-1))'} \, w \, dx\\
& = \| \mr'(h) \|_{L^{(q/(p-1))'}_w}^{(q/(p-1))'}  \leq 2^{(q/(p-1))'} \|h\|_{L^{(q/p)'}_w}^{(q/p)'}, 
\ena
where the last inequality follows from \eqref{eq113}. 

Substituting \eqref{eq117} into \eqref{eq116} and recalling   \eqref{eq115}, we obtain the desired estimate when $p<q<\infty$.
\end{proof} 
\end{appendices}


\begin{thebibliography}{xx}
\bibitem{AN} K. Adimurthi and N. C. Phuc, {\it Global Lorentz and Lorentz-Morrey estimates below the natural exponent 
for quasilinear equations}. Submitted for publication.
%
%
%
 \bibitem{BW2} S. Byun and L. Wang, {\it Elliptic equations with BMO coefficients in Reifenberg domains},  Comm. Pure Appl. Math.  {\bf 57}  (2004), 1283--1310.
 

%
 
 \bibitem{BW3} S. Byun,  L. Wang, and S. Zhou,  {\it Nonlinear elliptic equations with BMO coefficients in Reifenberg domains},  J. Funct. Anal.
 {\bf 250}  (2007), 167--196.
 
 \bibitem{BW1} S. Byun and L. Wang, {\it Elliptic equations with BMO nonlinearity in Reifenberg domains},  Adv. Math.  {\bf 219}  (2008), 1937--1971.
 


 
 \bibitem{CP} L.  Caffarelli and I. Peral, {\it On $W^{1,\, p}$ estimates for elliptic equations in divergence form}, Comm. Pure Appl. Math. {\bf 51}
 (1998) 1--21.

 
%
%
%
%
%
%
%
\bibitem{Fef} C. Fefferman,   {\it The uncertainty principle}, Bull. Amer. Math. Soc.
{\bf 9}  (1983), 129--206.

\bibitem{GR} J. Garc\'ia-Cuerva and J. L. Rubio de Francia, {\it Weighted norm inequalities and related topics}. North-Holland Mathematics Studies, 116. Notas de Matem\'atica [Mathematical Notes], 104. North-Holland Publishing Co., Amsterdam, 1985. x+604 pp. 

%
%
 \bibitem{Gra} L. Grafakos, {\it Classical and Modern Fourier Analysis},
 Pearson Education, Inc., Upper Saddle River, NJ, 2004, xii+931 pp.
%
 \bibitem{GP} J. Guadalupe and M. Perez, {\it Perturbation of orthogonal Fourier expansions}, J. Approx. Theory {\bf 92}  (1998) 294--307.
%
 \bibitem{HM} P. Hajlasz and  O. Martio, {\it Traces of Sobolev functions on fractal type sets and characterization of extension domains},
 J. Funct. Anal. {\bf 143}  (1997) 221--246.
%
%
%
\bibitem{I} T. Iwaniec, {\it Projections onto gradient fields and $L^p$-estimates for degenerated elliptic
operators}, Studia Math. {\bf 75} (1983), 293--312.
%
\bibitem{IW} T. Iwaniec and C. Sbordone, {\it Weak minima of variational integrals}, J. Reine Angew. Math, {\bf 454} (1994), 143--161.

 \bibitem{IKM} T. Iwaniec, P. Koskela, and G. Martin, {\it Mappings of BMO-distortion and Beltrami-type operators}, J. Anal.
 Math. {\bf 88} (2002) 337--381.
%
 \bibitem{JK} D. Jerison and C. Kenig, {\it The inhomogeneous Dirichlet problem in Lipschitz domains},  J. Funct. Anal.  {\bf 130}  (1995),  161--219.
%
 \bibitem{Jon} P. W. Jones, {\it Quasiconformal mappings and extendability of functions in Sobolev spaces}, Acta Math. {\bf 147} (1981)
71--88.
%
 \bibitem{KT1} C. Kenig and T. Toro, {\it Free boundary regularity for harmonic measures and the Poisson kernel}, Ann.  Math. {\bf 150} (1999)
 367--454.
%
 \bibitem{KT2} C. Kenig and T. Toro, {\it Poisson kernel characterization of Reifenberg flat chord arc domains}, Ann. Sci. \'{E}cole Norm. Sup. 
 (4) {\bf 36} (2003) 323--401.
 
 \bibitem{KZ1} J. Kinnunen and S. Zhou, {\it A local estimate for nonlinear equations with discontinuous
 coefficients}, Comm. Partial Differential Equations {\bf 24} (1999), 2043--2068.
 
 \bibitem{KZ2} J. Kinnunen and S. Zhou, {\it   A boundary estimate for nonlinear equations with discontinuous
 coefficients}, Differential Integral Equations {\bf 14} (2001), 475--492.
 
  \bibitem{John} J. L. Lewis, {\it On Very weak solutions of certain elliptic systems}, Comm. Partial Differential Equations, {\bf 18} (1993), 1515--1537.

%
%
 \bibitem{M-P1} T. Mengesha and N. C. Phuc, {\it Weighted and regularity estimates for nonlinear equations on Reifenberg flat domains},  J. Differential Equations {\bf 250} (2011), 1485--2507. 
%
 \bibitem{M-P2} T. Mengesha and N. C. Phuc, {\it Global estimates for quasilinear elliptic equations on Reifenberg flat domains}, Arch. Ration. Mech. Anal. {\bf 203} (2011), 189--216.
%
\bibitem{MNP} T. Mengesha and N. C. Phuc {\it Quasilinear Ricatti type equations with distributional data in Morrey space framework}. Submitted for publication. 
%
%
%
%
 \bibitem{Mil} M. Milman, {\it Rearrangements of BMO functions and interpolation}, Lecture Notes in Math., Vol. {\bf 1070}, Springer,
 Berlin, 1984.
%
%
%
%
\bibitem{NCP} N. C. Phuc, {\it Nonlinear Muckenhoupt-Wheeden type bounds on Reifenberg flat domains, with applications to quasilinear Riccati type equations}, Adv.  Math.
{\bf 250} (2014), 387--419.

%
%
 \bibitem{Ph3} N. C. Phuc,  {\it On Calder\'on-Zygmund theory for $p$- and $\aa$-superharmonic functions}, Calc. Var.  Partial Differential Equations {\bf 46} (2013) 165--181. 
%
%
 \bibitem{Rei} E. Reifenberg, {\it Solutions of the Plateau Problem for $m$-dimensional surfaces of varying topological type}, Acta
 Math. {\bf 104} (1960) 1--92.
%
%
 \bibitem{Sa} D. Sarason, {\it Functions of vanishing mean oscillation}, Trans. Amer. Math. Soc.
 {\bf 207} (1975), 391--405.
%
\bibitem{Se} S. Semmes, {\it Hypersurfaces in $\RR^n$ whose unit normal has small BMO norm}, Proc. Amer. Math. Soc. {\bf 112} (1991)
 403--412.
%
\bibitem{Soh} H. Sohr, {\it A regularity class for the Navier-Stokes equations in Lorentz spaces}, J. Evol. Equ. {\bf 1} (2001) 441--467.

 \bibitem{Tol} P. Tolksdorf, {\it Regularity for a more general class of quasilinear elliptic equations}, J. Diff. Equa. {\bf 51} (1984), 126--150.
%
 \bibitem{Tor} T. Toro, {\it Doubling and flatness: geometry of measures}, Notices Amer. Math. Soc. {\bf 44} (1997) 1087--1094.
%
%
%
 \bibitem{VMR} M. Vuorinen, O. Martio, and V. Ryazanov, {\it On the local behavior of quasiregular mappings in n-dimensional space}, Izv.
 Math. {\bf 62}  (1998) 1207--1220.
%
 \bibitem{W} L. Wang,  {\it A geometric approach to the Caldero\'n-Zygmund estimates}, Acta Math. Sin. (Engl.
 Ser.) {\bf 19} (2003), 381--396.

%

\bibitem{Zie} W. P. Ziemer, {\it Weakly differentiable functions}, Graduate Texts in Mathematics, Vol. {\bf 120},
Springer-Verlag, New York, 1989.

\end{thebibliography}
\end{document}